\newtheorem{theorem}{Theorem}[section]
\newtheorem{corollary}{Corollary}[theorem]
\newtheorem{lemma}[theorem]{Lemma}
\newtheorem{proposition}[theorem]{Proposition}
\newtheorem{remark}{Remark}
\newtheorem{example}{Example}
\newtheorem{definition}{Definition}
\newcommand{\bs}{\boldsymbol}
\newcommand{\tb}{\textbf}
\newcommand{\mr}{\mathrm}
\title[Group of isometries]{Fixed points and normal automorphisms of the unit ball of  bounded operators on $\mathbb{C}^n$}
\author[ R. Aggarwal, K. Gongopadhyay and M. M. Mishra  ]{Rachna Aggarwal, Krishnendu Gongopadhyay and Mukund Madhav Mishra   }
\address{Department of Mathematics, University of Delhi, Delhi, India}
\email{rachna2389@gmail.com}
\address{Department of Mathematical Sciences, Indian Institute of Science Education and Research (IISER) Mohali, Knowledge City,
S.A.S. Nagar, Sector 81, P.O. Manauli 140306, India}
\email{krishnendu@iisermohali.ac.in}
\address{Department of Mathematics, Hans Raj College, University of Delhi, Delhi, India}
\email{mukund@hrc.du.ac.in}
\numberwithin{equation}{section}
\date{\today}
\begin{document}
 
\begin{abstract}
  We  examine the group of isometries of the open unit ball of a complex Banach space of certain bounded linear operators  equipped with the Carath\'eodory  metric. Therein we obtain a characterization of the normal isometries in terms of their special type of fixed points. 
\end{abstract}
\keywords{unit ball,  Caratan domain, automorphism group, normal isometry, bounded linear operators}
\subjclass[2020]{Primary 32M15; Secondary 47B02, 47B15, 47B91}
\maketitle
\section{Introduction}

Classical group $U(n,1)$ has been extensively studied in the literature from various perspectives. For instance, in \cite{GR}, \cite{GOL}, \cite{PA}, \cite{GON} and many more. A natural generalization of the unitary group $U(n,1)$  is the pseudo-unitary group $U(p,q)$. For its definition and details, see  \cite{NE}, for instance. It exhibits   substantial contrast in its formulative theory due to the presence of a higher rank component. These groups have been explored at certain places like \cite{NE} \cite{GP}, \cite{PO}, \cite{MO}, \cite{MU}, \cite{OS} and \cite{MA}. Some infinite dimensional versions have  also been  discussed, e.g. \cite{FR}, \cite{HA},  \cite{GW} and \cite{KW}. Also Popescu in \cite{Po} has studied hyperbolic geometry on non commutative unit balls in infinite dimensional set up.

In this article,  we consider certain infinite dimensional extension of these pseudo-unitary groups and investigate one of their significant subclass from the fixed point classification perspective. The subclass under consideration is the class of normal elements.  If we consider  holomorphic isometry  on the open unit ball of  the complex Banach space of bounded linear operators between Hilbert spaces then it can be represented by a linear operator on a Hilbert space.   \textit{\textit{We reserve the phrases \lq normal isometry\rq \,\, and \lq non-unitary normal isometry\rq \,\, for an isometry whose linear representation is normal and non-unitary normal as an operator.}} A bounded linear operator on a Hilbert space is called normal if it commutes with its Hilbert adjoint and unitary if its Hilbert adjoint coincides with its inverse. An operator is non-unitary normal if it is not unitary and a normal operator. The central result of this article is Theorem \ref{y} which characterizes a non-unitary normal isometry on the basis of a special class of its fixed points.  Before describing our main result, we make  the following set up below. \\

Let \(K\) and \(H\) be complex Hilbert spaces and $B(K,H)$ be the complex Banach space of all bounded linear operators from $K$ to $H$ equipped with the operator norm. If $K=H$, this will be abridged to $B(H)$. Let \(\mathcal{B}\) be the  open unit ball   of $B(K,H)$ equipped with the Carath\'eodory metric. It is an infinite dimensional extension of the Cartan domain of type 1.   Let $U(H)$ denote the group of unitary operators on $H$.  The holomorphic automorphisms of $\mathcal B$ has an action on the unit ball of $\mathcal B(K,H)$ by operator-valued M\"obius transformations.   This is an operator valued analogue of the classical  M\"obius transformations acting on the Poincar\'e disc.  The automorphism group of $\mathcal B$ as operator-valued linear fractional transformations was determined  independently  by Greenfield and Wallach \cite{GW}, Harris  \cite{HA} and Kaup \cite{KW} respectively in different contexts. The general form of a holomorphic automorphism of $\mathcal{B}$ also follows from these works.  In \cite{FR},  Franzoni used a different approach that works also in infinite dimensions  and provided a linear representation of the group of holomorphic automorphisms.   Considering this linear group as our framework, we mainly work with the assumption $K=\mathbb{C}^n$.  In the following subsection, we describe the main result of this article. 

\subsection{Main results of the article}

We begin by providing the machinery which is needed to understand the statement of the main result. We shall follow Franzoni, \cite{FR} in our treatment to recall the basics.  Let 
$\mathcal{G}$ denote the group of all bijective linear operators defined on $H \oplus K$ which leave  the Hermitian form $\mathcal S$ invariant, where 
$\mathcal{S}:(H \oplus K) \times (H \oplus K) \rightarrow \mathbb{C}$ is defined as
\[\mathcal{S}\big((h_1,k_1),(h_2,k_2)\big)=\left<h_1,h_2\right>-\left<k_1,k_2\right>.\]

\medskip
\textit{From now on, throughout this subsection, {$K=\mathbb{C}^n$}  unless stated otherwise and  the sets $\{1,\,2,\,...,\,l\}$ and $\{1,\,2,\,...,\,k_i\}$ will be denoted by the symbols  $I$ and $J_i$ respectively where $i \in I$.} 
 
 \medskip 
A general element of $\mathcal{G}$ is $T=\left[ {\begin{array}{cc}
   BU & CV \\
   C^*U & EV \\
  \end{array} } \right]$ where
  \begin{itemize}
  \item $U \in U(H)$, $V \in U(\mathbb{C}^n)$, $B \in B(H)$, $E \in B(\mathbb{C}^n)$ and $C \in B(\mathbb{C}^n,H)$. 

\medskip 
  \item $E$ decomposes $\mathbb{C}^n$ orthogonally into eigenspaces as $\mathbb{C}^n=\overset{l}{\underset{i=1}{\oplus}}{K_i}\oplus {K'}^{\perp}$ where $K'=\overset{l}{\underset{i=1}{\oplus}}{K_i}$.  $E \restriction_{K_i}=a_i\,I$ and $E \restriction_{{K'}^{\perp}}=I$,  $a_i > 1$, $i \in I$.\\  Let $dim\,(K')=k$ and $dim\,(K_i)=k_i$, hence $\sum\limits_{i=1}^{l}{k_i}=k$. 
  Let  $\beta_i=\{e_{i_j},\,\,j \in J_i\} $
   be an orthonormal basis of $K_i$, $i \in I$.

\medskip 
   \item  $C(e_{i_j})=\xi_{i_j}$  where $\xi_{i_j} (\neq 0) \in H$ for every $i\in I$ and $j\in J_i$ and distinct ${\xi_{i_j}}'s$  are mutually orthogonal making $C\restriction_{K'}$ a bijective operator and $C\restriction_{{K'}^{\perp}}\equiv 0$. Hence $dim\,(ran\,C)=k=dim\,(K')$ where $ran\,C=$ Range of $C$.

\medskip 
   \item $\|\xi_{i_j}\|=\|\xi_{i_{j'}}\|$ for an arbitrarily fixed $i\in I$ and for all $j,\,j' \in J_i.$ Let $\|\xi_{i_j}\|=\|\xi_{i_{j'}}\|=\delta_i$ giving  
   ${a_i}^2=1+{\delta_i}^2,\,\,i \in I.$

\medskip 
    \item $C^{*}(\xi_{i_j})={\delta_i}^2\,e_{i_j}$ and  $C^*\equiv 0$ on ${(ran\,C)}^{\perp}$, for every $i \in I$ and $j \in J_i$.

\medskip 
   \item Let $M_i=span\,\{\xi_{i_j},\,\,j\in J_i\}$. Then $ran\,C=\overset{l}{\underset{i=1}{\oplus}}{M_i}$,  $dim\,(M_i)=dim\,(K_i)\\=k_i$, $i \in I$.  The operator $B$ decomposes $H$ orthogonally into eigenspaces  as $H=\overset{l}{\underset{i=1}{\oplus}}{M_i}\oplus {(ran\,C)}^{\perp}$ where $B \restriction_{M_i}=a_i\,I$  and $B \restriction_{{(ran\,C)}^{\perp}}=I$. 
 Let  us denote an orthogonal basis $\{\xi_{i_j},\,\,j \in J_i\} $  of $M_i$, $i \in I$ by ${\beta_i}'$.
 
  \end{itemize}
    \begin{proposition}\label{P1}
Let $K$ be any arbitrary Hilbert space.  An isometry  $T=
 \left[ {\begin{array}{cc}
   BU & CV \\
   C^*U & EV \\
  \end{array} } \right] \in \mathcal{G}$ is normal if and only if $UB=BU$, $UC=CV$ and $VE=EV$.
 \end{proposition}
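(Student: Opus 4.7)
The plan is to unwind $TT^* = T^*T$ via block-matrix multiplication and then exploit the detailed spectral structure of $B$, $C$, and $E$ laid out in the setup.

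\textbf{Block reduction.} First, using that $U, V$ are unitary and that $B, E$ are self-adjoint, I would compute the $2 \times 2$ block forms of $TT^*$ and $T^*T$. The unitarity of $U$ and $V$ makes $TT^*$ independent of $U, V$ (they cancel on the diagonal), while $T^*T$ is conjugated blockwise by $U$ on top and $V$ on bottom. Thus $T$ is normal if and only if three identities hold:
(i) $U$ commutes with $B^2 + CC^*$;
(ii) $V$ commutes with $C^*C + E^2$;
(iii) $U(BC + CE) = (BC + CE)V$ (the $(2,1)$-block identity being the adjoint of (iii)).

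\textbf{Easy direction.} Assume $UB = BU$, $UC = CV$, and $VE = EV$. Taking the adjoint of $UC = CV$ yields $C^* U^* = V^* C^*$, from which a one-line manipulation produces the companion identity $C^* U = V C^*$. These two identities at once give $U C C^* = C C^* U$ and $V C^* C = C^* C V$, which imply (i) and (ii); and (iii) unfolds as $U(BC+CE) = BUC + CVE = BCV + CEV = (BC+CE)V$.

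\textbf{Hard direction.} The crucial observation is that $CC^*$ is scalar on every eigenspace of $B$: from the setup, $CC^* = \delta_i^{2}I$ on $M_i$ and $0$ on $(\mathrm{ran}\,C)^\perp$, so $B^2 + CC^* = (1 + 2\delta_i^{2})I$ on $M_i$ and $I$ on $(\mathrm{ran}\,C)^\perp$. Since the $a_i > 1$ are mutually distinct and $\delta_i^2 = a_i^2 - 1$, the scalars $1 + 2\delta_i^{2}$ are mutually distinct and all differ from $1$. Hence $B^2 + CC^*$ has the \emph{same} orthogonal eigenspace decomposition as $B$, so (i) forces $U$ to preserve each $M_i$ and $(\mathrm{ran}\,C)^\perp$; since $B$ is a scalar multiple of the identity on each of these summands, $UB = BU$ drops out. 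A parallel argument applied to $C^*C + E^2$ shows $V$ preserves each $K_i$ and $(K')^\perp$, giving $VE = EV$. Finally, for $UC = CV$, a direct computation on the basis gives $BC + CE = 2a_i\, C$ on $K_i$ and $0$ on $(K')^\perp$. Using the invariance of $M_i$ under $U$ and of $K_i$ under $V$ already established, restricting (iii) to $K_i$ reads $2a_i\, U C|_{K_i} = 2a_i\, C V|_{K_i}$; cancelling $2a_i \ne 0$ yields $UC = CV$ on $K'$, while on $(K')^\perp$ both sides are zero since $C$ vanishes there.

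\textbf{Main obstacle.} The routine block multiplications are mechanical; the only delicate point is matching the spectral decomposition of $B^2 + CC^*$ with that of $B$ (and likewise for $E$), which is precisely where the numerical identity $a_i^2 = 1 + \delta_i^{2}$ does the work. Once the eigenspaces are pinned down, the commutation conclusions and the off-diagonal extraction of $UC = CV$ are formal.
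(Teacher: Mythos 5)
Your block reduction and the easy direction coincide with the paper's. The problem is in the hard direction: Proposition \ref{P1} is stated for an \emph{arbitrary} Hilbert space $K$, but your argument rests entirely on the decomposition of $B$, $C$, $E$ into finitely many eigenspaces $M_i$, $K_i$ with scalars $a_i$, $\delta_i$. That structure is set up in the paper only under the standing assumption $K=\mathbb{C}^n$, and for general $K$ it simply need not exist: the positive operator $E=(I-A^*A)^{-1/2}$ can have purely continuous spectrum and no eigenvectors, so the statement that ``$CC^*$ is scalar on every eigenspace of $B$'' has no content and the spectral-matching step collapses. The paper's proof avoids spectra altogether. Using the identities $B^2-CC^*=I$, $E^2-C^*C=I$ and $BC=CE$ from (\ref{eq:19}), it rewrites $B^2+CC^*=2B^2-I$, $C^*C+E^2=2E^2-I$ and $BC+CE=2BC$, so your three block identities become $UB^2=B^2U$, $VE^2=E^2V$ and $UBC=BCV$. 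Since $B$ and $E$ are positive, each is the positive square root of its square, so commuting with $B^2$ (resp.\ $E^2$) forces commuting with $B$ (resp.\ $E$) by continuous functional calculus; then $UBC=BUC=BCV$ together with the invertibility of $B$ gives $UC=CV$ in one line, with no need for the restriction-to-$K_i$ computation.

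If you restrict to $K=\mathbb{C}^n$, your spectral argument is essentially sound (the $a_i$ are distinct by construction, so the eigenspaces of $B^2+CC^*$ do coincide with those of $B$, and similarly for $E$), but as written it does not prove the proposition in the stated generality. Replace the eigenspace matching with the algebraic reduction via (\ref{eq:19}) and the positivity of $B$ and $E$, and the proof goes through for arbitrary $K$.
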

 \begin{proposition} \label{z}
    For $K=\mathbb{C}^n$, unitary elements in $\mathcal{G}$ are of the form $\left[ {\begin{array}{cc}
   U & 0 \\
   0 & V \\
  \end{array} } \right]$ where $U\in U(H)$ and $V \in U(\mathbb{C}^n)$.
  \end{proposition}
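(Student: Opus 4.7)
The plan is to rewrite the defining relation of $\mathcal{G}$ as a single operator equation and then exploit unitarity. Let $J = \begin{pmatrix} I_H & 0 \\ 0 & -I_{\mathbb{C}^n} \end{pmatrix}$ be the self-adjoint involution on $H \oplus \mathbb{C}^n$ for which $\mathcal{S}(x, y) = \langle Jx, y \rangle$. Then membership $T \in \mathcal{G}$ is equivalent to the operator identity $T^{\ast} J T = J$.

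Suppose now that $T \in \mathcal{G}$ is unitary, so $T^{\ast} = T^{-1}$. Substituting into $T^{\ast} J T = J$ yields $T^{-1} J T = J$, i.e.\ $JT = TJ$, so $T$ commutes with $J$. Since $J$ has orthogonal $(\pm 1)$-eigenspaces $H \oplus \{0\}$ and $\{0\} \oplus \mathbb{C}^n$, the operator $T$ preserves both, hence takes the block-diagonal form $T = \begin{pmatrix} U & 0 \\ 0 & V \end{pmatrix}$ with $U \in B(H)$ and $V \in B(\mathbb{C}^n)$; unitarity of $T$ then passes block-wise to force $U \in U(H)$ and $V \in U(\mathbb{C}^n)$. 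The converse is immediate, as any such $T$ is unitary on $H \oplus \mathbb{C}^n$ and preserves $\mathcal{S}$ because the form splits as a difference across the direct-sum components.

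There is no serious obstacle with this route; the reformulation via $J$ reduces the proposition to an elementary observation about operators commuting with a self-adjoint involution. A more calculational alternative would start from the explicit form of $T$ and expand $T^{\ast} T = I$ to extract $B^2 + C C^{\ast} = I_H$; restricting to $M_i$ and using $B|_{M_i} = a_i I$, $C C^{\ast}|_{M_i} = \delta_i^{2} I$, together with the identity $a_i^{2} = 1 + \delta_i^{2}$, would give $2 \delta_i^{2} = 0$, contradicting $\xi_{i_j} \neq 0$. This would rule out any nontrivial $M_i$, forcing $C = 0$, $B = I_H$, $E = I_{\mathbb{C}^n}$, and the same conclusion.
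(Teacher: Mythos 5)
Your argument is correct, and it takes a genuinely different route from the paper. The paper first proves (Proposition \ref{P3}) that $T\in\mathcal{G}$ is unitary if and only if $C\equiv 0$, by writing out $T^{*}$ and $T^{-1}=\left[\begin{smallmatrix} B^{*} & -D^{*}\\ -C^{*} & E^{*}\end{smallmatrix}\right]$ and comparing entries; it then invokes the structural data of Remark \ref{r6} (namely $B=I$ on $(\mathrm{ran}\,C)^{\perp}=H$ and $a_i=1$, so $E=I$) to land on the block-diagonal form. Your proof instead encodes membership in $\mathcal{G}$ as $T^{*}JT=J$ for the signature involution $J=\mathrm{diag}(I_H,-I_{\mathbb{C}^n})$, so that unitarity ($T^{*}=T^{-1}$) immediately gives $JT=TJ$; preservation of the $(\pm 1)$-eigenspaces of $J$ then yields block-diagonality, and unitarity passes to the blocks. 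This is cleaner and strictly more general: it uses neither Proposition \ref{P3} nor the finite-dimensional eigenspace decomposition of $E$ and $B$, and so proves the statement for an arbitrary Hilbert space $K$, not just $K=\mathbb{C}^n$. Your sketched calculational alternative ($T^{*}T=I$ giving $B^{2}+CC^{*}=I$, which against $B^{2}-CC^{*}=I$ forces $CC^{*}=0$, i.e.\ $2\delta_i^{2}=0$ on each $M_i$) is essentially the paper's route compressed into one computation. The only point worth keeping in mind is that $\mathcal{G}$ is defined as the \emph{bijective} operators preserving $\mathcal{S}$, so the equivalence with $T^{*}JT=J$ should be read as characterizing the invariance condition, with bijectivity supplied separately; this is automatic in both directions of your argument.
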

  \begin{proposition}\label{h}
 For $K=\mathbb{C}^n$, Let $T=\left[ {\begin{array}{cc}
   BU & CV \\
   C^*U & EV \\
  \end{array} } \right] \in \mathcal{G}$. Then $T$ is non-unitary normal if and only if 
  \begin{align}
U(M_i)&=M_i ,\notag\\
V(K_i)&=K_i ,\,\,\text{and}\notag\\
 {[U\restriction_{M_i}]}_{{\beta_i}'}&={[V\restriction_{K_i}]}_{{\beta_i}}. \label{eq:36}  
  \end{align}
   \end{proposition}
\begin{definition}\label{d1}
An operator $F \in B(\mathbb{C}^n,H)$ will be called a \textbf{fixed point} of $T=\left[ {\begin{array}{cc}
   BU & CV \\
   C^*U & EV \\
  \end{array} } \right] \in \mathcal{G}$ if there exists a basis $\{\bs{\mr{z_r}}, \,\,r=1,\,2,\,...,\,n\}$ of $\mathbb{C}^n$ such that $\left[{\begin{array}{c}
  F(\boldsymbol{\mr{z}_r})\\
  \boldsymbol{\mr{z}_r}\\
  \end{array}}\right],\,\,\,\,r=1,\,2,\,...,\,n$ are eigenvectors of $T$.  
 \end{definition} 
\begin{definition}\label{d2}
The functions $F_\theta \in B(\mathbb{C}^n,H)$ defined below will be called of \textbf{generic} type.\\
 Let $S=\{(\epsilon_m)_{m=1}^{q},\,\,\epsilon_m\in \{ 1,-1\}\}$ be a collection of $q$-tuples where $q \leq n$. For $\theta=(\epsilon_m)_{m=1}^{q} \in S$, define $F_\theta \in B(\mathbb{C}^n,H)$ as 
   \begin{align*}
      F_\theta(\bs{\mr{z}_m})&=\epsilon_m\, y_m,\,\,\,\,m=1,\,2,\,...,\,q, \\
      F_\theta(\bs{\mr{z}_s})&=0, \,\,\,\,s=q+1,\,...,\,n 
  \end{align*}
  where $\{\bs{\mr{z}_r},\,\,r=1,\,2,\,...,\,n\}$ is an orthonormal basis of $\mathcal{C}^n$ and $\{y_m,\,\,m=1,\,2,\,...,\,q\}$ are non-zero vectors in $H$. \\
  If $\left[{\begin{array}{c}
  F_\theta(\boldsymbol{\mr{z}_r})\\
  \boldsymbol{\mr{z}_r}\\
  \end{array}}\right],\,\,\,\,r=1,\,2,\,...,\,n,\,\theta \in S$ are eigenvectors of $T$ then in view of Definition \ref{d1}, ${F_\theta}$'s are called \textbf{fixed points of generic type.}
\end{definition}
Note that in Definition \ref{d2}, $|S|=2^q$. Hence $|\{F_\theta,\,\,\theta \in S\}|=2^q$.
Now we state the main result.
\begin{theorem}\label{y}
   Let an isometry $T=\left[ {\begin{array}{cc}
   BU & CV \\
   C^*U & EV \\
  \end{array} } \right] \in \mathcal{G}$ be such that $dim\,(ran\\\,C)=k$. Then $T$ is non-unitary normal if and only if it possesses exactly $2^k$ fixed points of the generic type. The generic fixed points lie on the boundary of the unit ball ${\mathcal{B}}$.
   \end{theorem}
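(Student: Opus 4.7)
The plan is direct. Assume first that $T$ is non-unitary normal. Proposition~\ref{h} supplies the invariance $U(M_i)=M_i$, $V(K_i)=K_i$ and the matrix-matching condition. For each $i$, I pick an orthonormal eigenbasis of $V|_{K_i}$; their union $\{z_m\}_{m=1}^k$ consists of $V$-eigenvectors in $K'$ with eigenvalues $\mu_m$ and membership $z_m\in K_{i(m)}$, and extends to an orthonormal basis of $\mathbb{C}^n$ by an eigenbasis $\{z_s\}_{s=k+1}^n$ of $V|_{(K')^\perp}$. Setting $y_m=Cz_m/\delta_{i(m)}$, the identity $C^*C|_{K_i}=\delta_i^2 I$ makes $\{y_m\}$ orthonormal, while $UC=CV$ yields $Uy_m=\mu_m y_m$. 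Using $By_m=a_{i(m)}y_m$, $Cz_m=\delta_{i(m)}y_m$, $C^*y_m=\delta_{i(m)}z_m$ and $Ez_m=a_{i(m)}z_m$, a block computation gives
\[
T\begin{bmatrix}\epsilon_m y_m\\ z_m\end{bmatrix}=\mu_m\bigl(a_{i(m)}+\epsilon_m\delta_{i(m)}\bigr)\begin{bmatrix}\epsilon_m y_m\\ z_m\end{bmatrix}\quad\text{and}\quad T\begin{bmatrix}0\\ z_s\end{bmatrix}=\nu_s\begin{bmatrix}0\\ z_s\end{bmatrix}
\]
for each $\epsilon_m\in\{\pm 1\}$ (here $\nu_s$ is the $V$-eigenvalue on $z_s$). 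This produces the $2^k$ generic fixed points $F_\theta$ of Definition~\ref{d2}; orthonormality of $\{y_m\}$ forces $\|F_\theta\|=1$, so each lies in $\partial\mathcal{B}$.

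For the converse, suppose $T$ admits a generic collection with $|S|=2^k$, i.e., $q=k$. For $s>k$ the eigenvector condition for $[\,0,\,z_s\,]^{t}$ forces $CVz_s=0$ and $EVz_s=\lambda_s z_s$; since $\ker C=(K')^\perp$ and $E|_{(K')^\perp}=I$, this yields $Vz_s=\lambda_s z_s$ with $z_s\in(K')^\perp$, and dimension counting makes $\{z_s\}$ an orthonormal basis of $(K')^\perp$, so $V(K')=K'$. For $m\le k$, adding and subtracting the eigenvector identities for the pairs with $\pm y_m$ produces the simultaneous relations $BUy_m=\alpha_m y_m$, $EVz_m=\alpha_m z_m$, $CVz_m=\beta_m y_m$, $C^*Uy_m=\beta_m z_m$ with $\beta_m\ne 0$. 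Projecting $EVz_m=\alpha_m z_m$ onto each eigenspace $K_i$ of $E$ and using the unitarity of $V$ together with the distinct $a_i$'s pins $z_m$ to a unique $K_{i(m)}$; an analogous argument, using the injectivity of $C^*|_{M_i}$ and the isometry identity $B^2-CC^*=I$, locates $y_m$ in $M_{i(m)}$. Dimension counting then makes $\{z_m:i(m)=i\}$ and $\{y_m:i(m)=i\}$ orthonormal bases of $K_i$ and $M_i$, giving $V(K_i)=K_i$ and $U(M_i)=M_i$. The four identities restricted to each block further yield $y_m=\pm Cz_m/\delta_{i(m)}$, so $UCz_m=CVz_m$ on a basis of $K_i$; extending linearly (both sides vanish on $(K')^\perp$) gives $UC=CV$. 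Finally $UB=BU$ and $VE=EV$ are immediate since $U$, $V$ preserve the eigenspaces of $B$, $E$, and Proposition~\ref{P1} identifies $T$ as normal; non-unitariness follows from $C\ne 0$.

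The main obstacle is the converse step that reconstructs the full block decomposition and the normality conditions purely from eigenvector data. Pinning each pair $(y_m, z_m)$ to a single block $M_{i(m)}\oplus K_{i(m)}$ requires a careful interplay between the distinctness of the eigenvalues $a_i$ of $E$, the isometry identities $B^2-CC^*=I$ and $E^2-C^*C=I$, and the rigidity forced by the $\pm 1$ sign symmetry; the last of these is what ultimately recovers the matrix-matching condition of Proposition~\ref{h} via the proportionality $y_m\propto Cz_m$.
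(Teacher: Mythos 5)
Your forward direction is sound and is essentially the paper's own argument (Proposition \ref{f} / Theorem \ref{g}): choose a common eigenbasis of $U\restriction_{M_i}$ and $V\restriction_{K_i}$, set $y_m=C\bs{\mr{z}}_m/\delta_{i(m)}$, and verify the eigenvector identity block by block. The converse, however, contains a genuine gap at its central step. You claim that ``projecting $EV\bs{\mr{z}}_m=\alpha_m \bs{\mr{z}}_m$ onto each eigenspace $K_i$ of $E$ and using the unitarity of $V$ together with the distinct $a_i$'s pins $\bs{\mr{z}}_m$ to a unique $K_{i(m)}$.'' That single relation does not pin anything: take $n=2$, $E=\mathrm{diag}(a_1,a_2)$ with $a_1\neq a_2$ and $V$ the swap of the two coordinates; then $EV=\left[\begin{smallmatrix}0&a_1\\ a_2&0\end{smallmatrix}\right]$ has eigenvectors $(\sqrt{a_1},\pm\sqrt{a_2})$, neither of which lies in a single eigenspace of $E$. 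So the localization of $\bs{\mr{z}}_m$ (and of $y_m$) cannot be extracted from $EV\bs{\mr{z}}_m=\alpha_m\bs{\mr{z}}_m$ alone; it requires combining all four relations. Concretely, from $BUy_m=\alpha_m y_m$ and $C^*Uy_m=\beta_m\bs{\mr{z}}_m$ one gets $\alpha_m C^*B^{-1}y_m=\beta_m\bs{\mr{z}}_m$, and from $CV\bs{\mr{z}}_m=\beta_m y_m$ and $EV\bs{\mr{z}}_m=\alpha_m\bs{\mr{z}}_m$ one gets $\alpha_m CE^{-1}\bs{\mr{z}}_m=\beta_m y_m$; eliminating $y_m$ shows $\bs{\mr{z}}_m$ is an eigenvector of $C^*B^{-1}CE^{-1}$ with eigenvalue $(\beta_m/\alpha_m)^2\neq 0$. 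Since this operator equals $(\delta_i^2/a_i^2)\,I$ on $K_i$ and $0$ on ${K'}^{\perp}$, and $\delta_i^2/a_i^2=1-a_i^{-2}$ separates the distinct $a_i$, the component of $\bs{\mr{z}}_m$ in all but one $K_i$ must vanish. This is exactly the computation in the paper's Lemma \ref{l1} (equations (\ref{eq:7})--(\ref{eq:8})), and it is the step your proposal replaces with an argument that fails.

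Two smaller points. First, you assert $\beta_m\neq 0$ without proof; it does follow, but only via the dimension count you set up for the $s>k$ indices (if $\beta_m=0$ then $V\bs{\mr{z}}_m\in\ker C={K'}^{\perp}$ and $E$ fixes it, forcing $\bs{\mr{z}}_m\in{K'}^{\perp}$ and overfilling the $(n-k)$-dimensional space ${K'}^{\perp}$), or via the paper's coefficient relation (\ref{eq:7}) which shows $\lambda_m=\lambda_m'$ would force $y_m=0$. Second, your reading of ``exactly $2^k$ generic fixed points'' as $q=k$ is a legitimate shortcut past the paper's own derivation of $q=k$ from $\dim(\ker C)=n-k$, but the rest of the converse still needs the containments $\bs{\mr{z}}_m\in K'$ and $y_m\in ran\,C$, which again come out of the combined four relations rather than any single one.
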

  \textit{Structure of the article.} Section 2 reviews the work of \cite{HA} and \cite{FR} by discussing the general form of a holomorphic automorphism of $\mathcal{B}$ and its linear representation. Section 3 discusses the general form of an isometry in $\mathcal{G}$ along with the description of its normal, self-adjoint and unitary elements for a general Hilbert space $K$. Section 4 investigates general form, normal and unitary elements  under the finite dimensionality assumption on $K$. Section 5 focuses on normal isometries and explores their spectral properties along with their fixed point characterization. We prove our main result in this section.
  
  \section{Holomorphic automorphisms of $\mathcal{B}$}

As mentioned above, let $\mathcal{B}$ be the open unit ball in $B(K,H)$. 
A biholomorphic map or a holomorphic  automorphism of $\mathcal{B}$ is a bijective holomorphic mapping from $\mathcal{B}$ to $\mathcal{B}$ having holomorphic inverse. Let $\overline{\mathcal{B}}$ denote the closure of  $\mathcal{B}$ in the operator norm  and $\partial{\mathcal{B}}$,  the boundary of $\overline{\mathcal{B}}$ in $B(K,H)$.  For $T \in B(H)$,  $\sigma(T)$ and $\sigma_{pt}(T)$ denote the spectrum and the collection of eigenvalues of $T$,  respectively.  Since biholomorphic automrphisms turn out to be isometries for the  Carath\'eodory metric, cf. \cite[Proposition IV.1.6]{FV} \textit{\textbf{throughout this article, the word isometry would either mean a biholomorphic automorphism of $\mathcal{B}$ or its linear representation.}} As Harris \cite{HA} and Franzoni \cite{FR} give the general form of a holomorphic isometry acting on $\mathcal{B}$ and discuss its linear representation, we begin with a quick review of their work which forms the basis of our study.\\
\begin{remark}\label{b}
 \cite[Chapter 1, (3.4)]{NA}  For a contraction $T \in B(H)$, an application of continuous functional calculus tells that 
  \[T(I-T^*T)^{1/2}=(I-TT^*)^{1/2}T.\]
   \end{remark}
For $B \in \mathcal{B}$, the map $T_B:\mathcal{B} \rightarrow B(K,H)$ defined by 
\begin{eqnarray}\label{eq:14}
\hspace{10mm} T_B(A)=(I-BB^*)^{-1/2}(A+B)(I+B^*A)^{-1}(I-B^*B)^{1/2}
\end{eqnarray}
is a holomorphic automorphism of $\mathcal{B}$ such that $T_B(0)=B$ (see Remark \ref{b}) and ${T_B}^{-1}=T_{-B}$. $T_B$ is called the M\"{o}bius transformation \cite[Theorem 2]{HA}. 

\begin{theorem}\cite[Theorem 1.6]{FR}\label{t1}
 Every biholomorphic mapping $h:\mathcal{B} \rightarrow \mathcal{B}$ such that $h(0)=B$ is of the form 
\[h=T_B \circ L,\,\,\,\,B \in \mathcal{B}\]
where $L$ is a norm preserving surjective linear map of $B(K,H)$ and $T_B$ is a M\"{o}bius transformation (\ref{eq:14}) defined on $\mathcal{B}$.
 \end{theorem}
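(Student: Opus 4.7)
The plan is to reduce the statement to a Cartan-type uniqueness theorem for biholomorphic self-maps of $\mathcal{B}$ fixing $0$, exploiting the Möbius transformations $T_B$ already introduced. Given $h:\mathcal{B}\to\mathcal{B}$ biholomorphic with $h(0)=B$, I would set
\[
 g := T_{-B}\circ h = T_{B}^{-1}\circ h.
\]
Since $T_{-B}$ is itself a biholomorphic automorphism of $\mathcal{B}$ (by the paragraph containing \eqref{eq:14}), the map $g$ is biholomorphic from $\mathcal{B}$ onto $\mathcal{B}$ and $g(0)=T_{-B}(B)=0$. If one shows that every biholomorphic self-map $g$ of $\mathcal{B}$ fixing $0$ is the restriction to $\mathcal{B}$ of a surjective norm-preserving linear operator $L$ on $B(K,H)$, then $h=T_{B}\circ L$ is the desired factorization.

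The core step is therefore a Cartan-type uniqueness result in this Banach-space setting. Writing $L := g'(0)$ and using that $\mathcal{B}$ is a bounded balanced (circular) domain in the Banach space $B(K,H)$, I would expand $g$ as a uniformly convergent series of continuous homogeneous polynomials
\[
 g(A) = L(A) + P_2(A) + P_3(A) + \cdots
\]
on $\mathcal{B}$. Replacing $g$ by $L^{-1}\circ g$ (still a biholomorphic self-map with value $0$ and derivative $I$ at the origin), the $n$-th iterate takes the form $g^{(n)}(A) = A + n\,P_2(A) + O(\|A\|^3)$ and again maps $\mathcal{B}$ into itself. Applying Cauchy estimates on a disc $\{\lambda A : |\lambda| < 1/\|A\|\}$ to a fixed $A$ and letting $n\to\infty$ forces $P_{m}\equiv 0$ for all $m \geq 2$, so $g=L$ on $\mathcal{B}$.

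Once $g=L$ on $\mathcal{B}$, the balanced property of $\mathcal{B}$ gives a canonical extension of $L$ to all of $B(K,H)$ via $L(A):=t^{-1}L(tA)$ for any sufficiently small $t>0$, and the same construction applied to $g^{-1}$ yields a two-sided linear inverse. The condition that $L$ maps $\mathcal{B}$ onto $\mathcal{B}$ then translates, again by balancedness, to $\|L(A)\|=\|A\|$ for every $A\in B(K,H)$, so $L$ is a surjective norm-preserving linear map. Composing with $T_B$ delivers $h=T_{B}\circ L$.

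The main obstacle is the rigorous execution of the infinite-dimensional Cartan step: establishing the homogeneous-polynomial expansion of $g$ with valid Cauchy estimates in the $B(K,H)$-valued setting and using the iterates $g^{(n)}$ to kill all higher-order terms. A secondary, more technical point is the passage from the local identity $g=L$ on $\mathcal{B}$ to a globally defined bijective linear isometry on all of $B(K,H)$; this is a soft argument, but it requires the balanced structure of $\mathcal{B}$ to be used explicitly and, for surjectivity, the application of the same Cartan argument to the inverse automorphism $g^{-1}$.
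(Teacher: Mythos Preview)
The paper does not actually prove this theorem; it is quoted from Franzoni \cite{FR} as background, so there is no ``paper's own proof'' to compare against. Your overall strategy---compose with $T_{-B}$ to reduce to an automorphism $g$ fixing $0$, then invoke a Cartan-type linearity theorem---is exactly the standard route and is how the result is established in the cited literature.

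There is, however, a genuine gap in your execution of the Cartan step. You write that you replace $g$ by $L^{-1}\circ g$ and call this ``still a biholomorphic self-map'' of $\mathcal{B}$. But at this point you do not yet know that $L^{-1}$ maps $\mathcal{B}$ into $\mathcal{B}$; that is precisely part of the conclusion (norm-preservation of $L$) you are trying to reach. So the iterates $(L^{-1}\circ g)^{(n)}$ are not known to stay in $\mathcal{B}$, and the Cauchy estimates you want to invoke do not apply. The iteration argument you sketch is correct only for holomorphic self-maps with $f(0)=0$ and $f'(0)=I$, and you have not produced such a map.

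The standard repair avoids $L^{-1}\circ g$ altogether: for each $\theta\in\mathbb{R}$ set $f_\theta := g^{-1}\circ \rho_{-\theta}\circ g\circ \rho_\theta$, where $\rho_\theta(A)=e^{i\theta}A$. Since $\mathcal{B}$ is circular, each $f_\theta$ is a genuine biholomorphic self-map of $\mathcal{B}$ with $f_\theta(0)=0$ and $f_\theta'(0)=L^{-1}e^{-i\theta}Le^{i\theta}=I$. Now your iteration/Cauchy-estimate argument applies verbatim to $f_\theta$ and yields $f_\theta=\mathrm{id}$, i.e.\ $g(e^{i\theta}A)=e^{i\theta}g(A)$ for all $\theta$. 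Comparing homogeneous components forces $P_m\equiv 0$ for $m\ge 2$, so $g=L$. The rest of your argument (extension to all of $B(K,H)$, norm-preservation, surjectivity via $g^{-1}$) then goes through as you wrote it.
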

So far there has been no restriction on the dimensions of $K$ and $H$. \\
\textbf{\textit{From now onwards we will assume \pmb{$H$} and \pmb{$K$} have different dimensions}}, i.e. they have orthonormal bases of different cardinalities.\\ 
 
Let $K$ and $H$ be Hilbert spaces having different dimensions. Then for a norm preserving surjective linear isometry $L$  defined on $B(K,H)$, there exist $U \in U(H)$ and $V \in U(K)$ such that 
 \begin{eqnarray}\label{eq:12}
L(A)=UAV,\,\,\,\,\,\,\,\forall A \in B(K,H),
\end{eqnarray}
 cf. \cite[Theorem 4.6]{FR}. $L$ is the generalization of a unitary operator to Banach spaces. Collection of all the biholomorphic mappings of the form $h$ defined in Theorem \ref{t1} forms a group which will be denoted by $Aut(\mathcal{B})$. This group acts transitively on $\mathcal{B}$, see \cite[Corollary 2]{HA}.\\
For $A, B \in \mathcal{B}$, the Carath\'eodory distance $C_D$ \cite[Sec. 3]{FR} is defined as 
\[C_D(A,B)=\rho\bigg(0, \|T_{-B}(A)\|\bigg)=tanh^{-1}\bigg(\|T_{-B}(A)\|\bigg)\]
where $\rho$ is the Poincar\'e metric on the open unit ball in $\mathbb{C}$ and $T_{-B}$ is the inverse of M\"{o}bius transformation $T_B$ defined in (\ref{eq:14}). For definition and details on Carath\'eodory metric, refer \cite{FV}.\\
A linear representation of $Aut(\mathcal{B})$ due to Franzoni \cite[section 5]{FR}  is as follows.

Consider the Hermitian form $\mathcal{S}:(H \oplus K) \times (H \oplus K) \rightarrow \mathbb{C}$ defined as
\[\mathcal{S}\big((h_1,k_1),(h_2,k_2)\big)=\left<h_1,h_2\right>-\left<k_1,k_2\right>.\]
Let $\mathcal{G}$ be the group of all bijective linear transformations defined on $H \oplus K$ leaving $\mathcal{S}$ invariant. The general form of an element of $\mathcal{G}$ is $T=\left[ {\begin{array}{cc}
   B & C \\
   D & E \\
  \end{array} } \right]$ where $B:H \rightarrow H$, $C:K \rightarrow H$, $D:H \rightarrow K$ and $E:K \rightarrow K$ are bounded linear operators satisfying
  \begin{equation}\label{eq:1}
  \begin{aligned}
  B^*B-D^*D&=I,\\
  E^*E-C^*C&=I,\\
   C^*B-E^*D&=0,\\
  BB^*-CC^*&=I,\\
  EE^*-DD^*&=I,\,\,\text{and}\\
  DB^*-EC^*&=0.
  \end{aligned}
  \end{equation}
  The above equations imply that $B$
and $E$ are invertible operators, see \cite[Lemma 5.1]{FR}.
\begin{remark}
For $T=\left[ {\begin{array}{cc}
   B & C \\
   D & E \\
  \end{array} } \right] \in \mathcal{G}$,  $T^{-1}=\left[ {\begin{array}{cc}
   B^* & -D^* \\
   -C^* & E^* \\
  \end{array} } \right]$, see \cite[page 63]{FR}.
\end{remark}
 The following theorem shows the identification of $Aut(\mathcal{B})$ with $\mathcal{G}$. It has been proved in \cite{FR} but here we give the detailed version of the proof.
  \begin{theorem}\label{c} \cite[Theorem 5.3]{FR}
Let  $H$ and $K$ be Hilbert spaces of different dimensions. The map $\psi:\mathcal{G} \rightarrow Aut(\mathcal{B})$ defined as $T \mapsto \psi(T)$, where $T=\left[ {\begin{array}{cc}
   B & C \\
   D & E \\
  \end{array} } \right]$  and $\big(\psi(T)\big)(X)=(BX+C)(DX+E)^{-1}$, $X \in \mathcal{B}$, is a surjective homomorphism.
  \end{theorem}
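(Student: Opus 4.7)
The plan is to verify in turn that $\psi(T)$ is a well-defined element of $Aut(\mathcal{B})$, that $\psi$ respects composition, and finally that every automorphism of the form given by Theorem \ref{t1} is hit. The relations in \eqref{eq:1}, together with the fact that $B$ and $E$ are invertible, will do all the heavy lifting.

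First I would show that $DA+E$ is invertible whenever $A\in\mathcal{B}$, so that $\psi(T)$ makes sense. Writing $DA+E=E(I+E^{-1}DA)$, it suffices to bound $\|E^{-1}D\|$. From $EE^*-DD^*=I$ one gets $E^{-1}DD^*(E^{-1})^*=I-(EE^*)^{-1}$, so $\|E^{-1}D\|^2=1-\|(EE^*)^{-1}\|^{-1}<1$, and since $\|A\|<1$ the Neumann series for $I+E^{-1}DA$ converges. Next I would show $\psi(T)(\mathcal{B})\subset\mathcal{B}$. The cleanest route is the classical indefinite-form computation: setting $W=(BA+C)(DA+E)^{-1}$, a direct use of the six identities in \eqref{eq:1} gives the factorization
\[
I-W^*W=(DA+E)^{-*}\bigl[(E^*E-C^*C)-(E^*D-C^*B)A-A^*(D^*E-B^*C)-A^*(D^*D-B^*B)A\bigr](DA+E)^{-1},
\]
which simplifies via \eqref{eq:1} to $(DA+E)^{-*}(I-A^*A)(DA+E)^{-1}$, a positive operator since $\|A\|<1$. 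Thus $\|W\|<1$. Holomorphy of $\psi(T)$ on $\mathcal{B}$ is immediate from the formula.

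For the homomorphism property, a block matrix multiplication of $T_1T_2$ and a side-by-side comparison of $(B_1B_2A+B_1C_2\dots)\cdot(\dots)^{-1}$ with $\psi(T_1)(\psi(T_2)(A))$ yields $\psi(T_1T_2)=\psi(T_1)\circ\psi(T_2)$ after clearing the common right factor $(D_2A+E_2)^{-1}$; this is a routine identity valid whenever the indicated inverses exist, which they do on $\mathcal{B}$ by the first step. In particular $\psi(T^{-1})$ is a two-sided inverse of $\psi(T)$, showing $\psi(T)\in Aut(\mathcal{B})$.

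Finally, for surjectivity I would invoke Theorem \ref{t1}, which writes every $h\in Aut(\mathcal{B})$ as $T_{B_0}\circ L$ with $L(A)=UAV$ by \eqref{eq:12}. The unitary factor $L$ is realized by $\psi$ of the block-diagonal element $\left[\begin{smallmatrix} U & 0 \\ 0 & V^{-1}\end{smallmatrix}\right]\in\mathcal{G}$, which clearly preserves $\mathcal{S}$. For the M\"obius factor, I would exhibit
\[
T_{B_0}\ \longleftrightarrow\ \begin{bmatrix}(I-B_0B_0^*)^{-1/2} & (I-B_0B_0^*)^{-1/2}B_0 \\ (I-B_0^*B_0)^{-1/2}B_0^* & (I-B_0^*B_0)^{-1/2}\end{bmatrix},
\]
check membership in $\mathcal{G}$ by verifying the six relations \eqref{eq:1} (this uses Remark \ref{b} crucially to interchange $B_0$ and $(I-B_0B_0^*)^{1/2}$), and then compute that $\psi$ of this matrix reproduces the formula \eqref{eq:14} for $T_{B_0}$. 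Composing the two preimages gives a preimage of $h$ under $\psi$.

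The main obstacle I anticipate is the indefinite-form identity in the second step: it requires using all six relations of \eqref{eq:1} in the right order and carefully tracking the adjoints of $(DA+E)^{-1}$. The surjectivity step is conceptually clean once Theorem \ref{t1} is in hand, but the verification that the explicit matrix above lies in $\mathcal{G}$ and implements $T_{B_0}$ also demands the functional-calculus identity of Remark \ref{b} to move square roots past $B_0$ and $B_0^*$.
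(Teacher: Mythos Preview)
Your approach matches the paper's: both reduce surjectivity via Theorem~\ref{t1} to exhibiting preimages of the linear factor $L$ (by the block-diagonal unitary $\left[\begin{smallmatrix}U&0\\0&V^{-1}\end{smallmatrix}\right]$) and of the M\"obius factor $T_{B_0}$ (by the explicit self-adjoint matrix you wrote down), with Remark~\ref{b} handling the square-root commutations. The paper simply declares well-definedness and the homomorphism property ``self evident'' and spends its effort \emph{deriving} the preimage matrix for $T_{B_0}$ from the constraint $\psi(T_2)=T_{B_0}$, whereas you write the matrix down directly and verify; either is fine. One small slip: your formula $\|E^{-1}D\|^2=1-\|(EE^*)^{-1}\|^{-1}$ is off (since $EE^*\geq I$ forces $\|(EE^*)^{-1}\|\leq 1$, your right-hand side would be $\leq 0$); the correct identity is $\|E^{-1}D\|^2=\|I-(EE^*)^{-1}\|=1-\|EE^*\|^{-1}<1$, after which your Neumann-series argument goes through.
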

  \begin{proof}
  Well-definedness and homomorphicity of $\psi$ are self evident. Let $h_A=T_A \circ L \in Aut(\mathcal{B})$, $A \in \mathcal{B}$. As $\psi$ is a  homomorphism, it is sufficient to provide pre-images of $T_A$ and $L$ separately under $\psi$. Notice that  for $L \in Aut(\mathcal{B})$ of the form $L(X)=UXV$ (cf. \ref{eq:12}), $T_1=\left[ {\begin{array}{cc}
   U & 0 \\
   0 & V^{-1} \\ 
  \end{array} } \right]$ does the job. Next, we need to find a suitable element $T_2=\left[ {\begin{array}{cc}
   B & C \\
   D & E \\
  \end{array} } \right] \in \mathcal{G}$ satisfying
  \[(BX+C)(DX+E)^{-1}=(I-AA^*)^{-1/2}(X+A)(I+A^*X)^{-1}(I-A^*A)^{1/2}\]
  for all $X \in \mathcal{B}$. Putting $X\equiv 0$ in the above equation gives $CE^{-1}=(I-AA^*)^{-1/2}A(I-A^*A)^{1/2}=A$  by Remark \ref{b}, thus $C=AE$. Also by (\ref{eq:1}),  $E^*E-I=C^*C=E^*A^*AE$, i.e. $E^*E=E^*A^*AE+I$ implying  $E^*=E^*A^*A+E^{-1}$ and $EE^*=EE^*A^*A+I$. Hence $EE^*(I-A^*A)=I$ or $EE^*=(I-A^*A)^{-1}$. Choose \(E\) to be the positive square root of $(I-A^*A)^{-1}$. This gives  $C=A(I-A^*A)^{-1/2}$.
  Next the relation $BB^*=I+CC^*$ and Neumann series expansion for the operator $(I-A^*A)^{-1}$ yield $BB^*=(I-AA^*)^{-1}$. Choose \(B\) to be the positive square root of the operator $(I-AA^*)^{-1}$. Lastly $DB^*=EC^*$ gives  $D=(I-A^*A)^{-1/2}A^*$. Observe that 
  \begin{eqnarray}\label{eq:20}
  D=C^*. 
  \end{eqnarray}
  So using Remark \ref{b}, $T_2=\left[ {\begin{array}{cc}
   (I-AA^*)^{-1/2} & {(I-AA^*)^{-1/2}}A \\
   (I-A^*A)^{-1/2}A^* & (I-A^*A)^{-1/2} \\
  \end{array} } \right]$.
  \end{proof}
  \begin{remark}\label{d}
  The elements in the centre $Z(\mathcal{G})$ of $\mathcal{G}$ are all the unit constant multiples of identity \cite[page 65]{FR} and $ker(\psi)=Z(\mathcal{G})$,  thereby making $\widetilde{\psi}:\mathcal{G}/Z(\mathcal{G}) \rightarrow Aut(\mathcal{B})$ a surjective isomorphism.
  \end{remark}
\section{Normal isometries of $\mathcal{B}$}
  We begin by presenting the general form of an element of $\mathcal{G}$ which is a direct application of the  proof of Theorem \ref{c} and Remark \ref{d}.
  \begin{proposition} \label{p}
  A general element of $\mathcal{G}$ is represented as
  \[T=e^{i \theta}\left[ {\begin{array}{cc}
   (I-AA^*)^{-1/2}U & {(I-AA^*)^{-1/2}}AV \\
   (I-A^*A)^{-1/2}A^*U & (I-A^*A)^{-1/2}V \\
  \end{array} } \right]\]
  where $ \theta \in \mathbb{R}$, $A \in \mathcal{B}$, $U \in U(H)$ and $V \in U(K)$.
 \end{proposition}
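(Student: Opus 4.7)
The plan is to combine the surjectivity of the homomorphism $\psi:\mathcal{G}\to Aut(\mathcal{B})$ from Theorem \ref{c} with the identification of $\ker\psi$ in Remark \ref{d}. Given any $T\in\mathcal{G}$, I would first look at its image $h:=\psi(T)\in Aut(\mathcal{B})$. By Theorem \ref{t1}, $h$ factors as $h=T_A\circ L$ for some $A\in\mathcal{B}$ and some norm preserving surjective linear map $L$ on $B(K,H)$. Because $H$ and $K$ have different dimensions, equation \eqref{eq:12} forces $L(X)=UXV$ for some $U\in U(H)$ and $V\in U(K)$.

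Next I would write down explicit pre-images under $\psi$ of these two factors, reusing the calculations already performed inside the proof of Theorem \ref{c}. Concretely, one has the pre-image
\[
T_2=\left[\begin{array}{cc}(I-AA^*)^{-1/2} & (I-AA^*)^{-1/2}A\\(I-A^*A)^{-1/2}A^* & (I-A^*A)^{-1/2}\end{array}\right]\in\mathcal{G}
\]
of the M\"obius transformation $T_A$, and the pre-image $T_1=\left[\begin{array}{cc}U & 0\\ 0 & V\end{array}\right]\in\mathcal{G}$ of $L$ (the relabeling of $V^{-1}$ as a new unitary $V$ is harmless because $U(K)$ is closed under inversion). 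A direct block multiplication then gives
\[
T_2T_1=\left[\begin{array}{cc}(I-AA^*)^{-1/2}U & (I-AA^*)^{-1/2}AV\\(I-A^*A)^{-1/2}A^*U & (I-A^*A)^{-1/2}V\end{array}\right],
\]
which is exactly the matrix appearing in the statement (without the scalar factor).

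To finish, I would observe that $\psi(T_2T_1)=T_A\circ L=\psi(T)$, so $T(T_2T_1)^{-1}\in\ker\psi$. By Remark \ref{d}, $\ker\psi=Z(\mathcal{G})$ consists of the unit modulus scalar multiples of the identity, so $T(T_2T_1)^{-1}=e^{i\theta}I$ for some $\theta\in\mathbb{R}$, which rearranges to the claimed formula $T=e^{i\theta}T_2T_1$. Conversely, any matrix of this form lies in $\mathcal{G}$ because it was constructed as a product of elements of $\mathcal{G}$.

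There is no real obstacle here: the result is a bookkeeping consequence of Theorems \ref{t1} and \ref{c} together with Remark \ref{d}. The only point needing a little care is verifying that $T_2\in\mathcal{G}$, but this is already implicit in the proof of Theorem \ref{c} where the positive square roots of $(I-AA^*)^{-1}$ and $(I-A^*A)^{-1}$ were chosen precisely so that the defining identities \eqref{eq:1} hold.
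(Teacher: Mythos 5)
Your proposal is correct and follows essentially the same route as the paper, which offers no separate proof but states that the proposition is ``a direct application of the proof of Theorem \ref{c}''. Your argument --- surjectivity of $\psi$, the factorization $h=T_A\circ L$ from Theorem \ref{t1}, the explicit pre-images $T_1$ and $T_2$ already constructed in the proof of Theorem \ref{c}, and $\ker\psi=Z(\mathcal{G})$ from Remark \ref{d} to account for the factor $e^{i\theta}$ --- is precisely that application spelled out in full.
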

 Observe that a general element $T$ of $\mathcal{G}$  in Proposition \ref{p} is the composition of a self-adjoint element $\left[ {\begin{array}{cc}
   (I-AA^*)^{-1/2} & {(I-AA^*)^{-1/2}}A \\
   (I-A^*A)^{-1/2}A^* & (I-A^*A)^{-1/2} \\
  \end{array} } \right]$  and a unitary operator $e^{i \theta} \left[ {\begin{array}{cc}
   U & 0 \\
   0 & V \\ 
  \end{array} } \right]$.\\
 
 For the sake of simplicity, let $B=(I-AA^*)^{-1/2}$, $C={(I-AA^*)^{-1/2}}A$ and $E=(I-A^*A)^{-1/2}$. So a general element of $\mathcal{G}$ would be 
  $T=\left[ {\begin{array}{cc}
   BU & CV \\
   C^*U & EV \\
  \end{array} } \right]$.  Observe that the fact $B$ and $E$ are positive operators simplifies (\ref{eq:1}) to 
  \begin{equation}\label{eq:19}
  \begin{aligned}
  B^2-CC^*&=I,\\
  E^2-C^*C&=I,\,\,\text{and}\\
   BC&=CE.
  \end{aligned}
  \end{equation}
 
We now discuss the characterising conditions for an isometry in $\mathcal{G}$ to be normal.\\\\

 \textbf{Proof of Proposition \ref{P1}.} We will compute the expressions of $TT^*$ and $T^*T.\\$ $TT^*=\left[ {\begin{array}{cc}
   BU & CV \\
   C^*U & EV \\
  \end{array} } \right]\left[ {\begin{array}{cc}
   U^{-1}B & U^{-1}C \\
   V^{-1}C^* & V^{-1}E \\
  \end{array} } \right]=\left[ {\begin{array}{cc}
   B^2+CC^* & BC+CE \\
   C^*B+EC^* & C^*C+E^2 \\
  \end{array} } \right]$\\\\ 
  and $T^*T=\left[ {\begin{array}{cc}
   U^{-1}B & U^{-1}C \\
   V^{-1}C^* & V^{-1}E \\
  \end{array} } \right]\left[ {\begin{array}{cc}
   BU & CV \\
   C^*U & EV \\
  \end{array} } \right]$\\\\
   $=\left[ {\begin{array}{cc}
   U^{-1}(B^2+CC^*)U & U^{-1}(BC+CE)V \\
   V^{-1}(C^*B+EC^*)U & V^{-1}(C^*C+E^2)V \\
  \end{array} } \right]$. 
 Equating the respective entries of  $TT^*$ and $T^*T$, we get
  \begin{align*}
      U(B^2+CC^*)&=(B^2+CC^*)U,\\
      U(BC+CE)&=(BC+CE)V,\\
      V(C^*B+EC^*)&=(C^*B+EC^*)U, \,\text{and}\\
      V(C^*C+E^2)&=(C^*C+E^2)V
  \end{align*}
  which in view of  (\ref{eq:19}) simplifies  to $UB^2=B^2U,\, UBC=BCV,\, \text{and}\,\, VE^2=E^2V.$ Hence $UB=BU$ and $VE=EV$. Also $UBC=BUC=BCV$ implies $UC=CV$. Thus $T$ is normal if and only if
      $UB=BU$,
     $ UC=CV$ and
     $ VE=EV.$

 As a corollary, we present the form of self-adjoint isometries.
 \begin{corollary}\label{c1}
 Let $T=
 \left[ {\begin{array}{cc}
   BU & CV \\
   C^*U & EV \\
  \end{array} } \right] \in \mathcal{G}$ be a normal isometry. Then $T$ is self-adjoint if and only if both $U$ and $V$ are self-adjoint operators.
 \end{corollary}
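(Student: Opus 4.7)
The plan is to compute $T^{*}$ explicitly, entry by entry, and then compare it to $T$ using the three commutation relations $UB=BU$, $UC=CV$, $VE=EV$ supplied by the normality hypothesis (Proposition \ref{P1}). Since $B$ and $E$ are positive operators, they are self-adjoint, and $U,V$ are unitary, so $U^{*}=U^{-1}$ and $V^{*}=V^{-1}$. Therefore
\[
T^{*}=\begin{bmatrix} U^{*}B & U^{*}C \\ V^{*}C^{*} & V^{*}E \end{bmatrix}.
\]

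For the forward direction, assume $T=T^{*}$. I would first exploit the two diagonal entries. The equality of the $(1,1)$-blocks reads $BU=U^{*}B$, i.e.\ $UBU=B$. Substituting the normality relation $UB=BU$ turns this into $BU^{2}=B$, and invertibility of $B=(I-AA^{*})^{-1/2}$ then forces $U^{2}=I$; combined with $U^{*}U=I$, this gives $U^{*}=U$. An identical argument applied to the $(2,2)$-blocks, using $VE=EV$ and the invertibility of $E=(I-A^{*}A)^{-1/2}$, yields $V^{*}=V$.

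For the reverse direction, assume $U^{*}=U$ and $V^{*}=V$ (so $U^{2}=I$, $V^{2}=I$). I would verify entry by entry that $T^{*}=T$: the $(1,1)$-entries match because $U^{*}B=UB=BU$; the $(1,2)$-entries match because $U^{*}C=UC=CV$, using the normality relation $UC=CV$; the $(2,2)$-entries match analogously via $V^{*}E=VE=EV$; and the $(2,1)$-entries match because $V^{*}C^{*}=VC^{*}=(CV)^{*}=(UC)^{*}=C^{*}U=C^{*}U$.

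There is no real obstacle here; the proof is essentially a bookkeeping exercise once one notices that the diagonal entries of the identity $T=T^{*}$, together with the commutation relations of Proposition \ref{P1} and the invertibility of $B$ and $E$, immediately force $U^{2}=V^{2}=I$. The mild subtlety worth flagging is that the off-diagonal entries do not by themselves yield $V^{2}=I$ (one would only get $(U-U^{*})C=0$, and $C$ need not be injective), so it is essential to use the diagonal blocks, where invertibility of $B$ and $E$ is available.
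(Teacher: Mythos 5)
Your proof is correct and follows essentially the same route as the paper: compute $T^{*}$ blockwise, equate entries, and combine the resulting relations with the commutation identities of Proposition \ref{P1} and the invertibility of $B$ and $E$ to conclude that $U$ and $V$ are unitary involutions, hence self-adjoint. Your version merely spells out the details (including the converse verification and the remark about why the diagonal blocks are needed) that the paper leaves implicit.
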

 \begin{proof}
 $T$ is self-adjoint if and only if $T=T^*$, i.e.
 \[\left[ {\begin{array}{cc}
   BU & CV \\
   C^*U & EV \\
  \end{array} } \right]=\left[ {\begin{array}{cc}
   U^{-1}B & U^{-1}C \\
   V^{-1}C^* & V^{-1}E \\
  \end{array} } \right]\] which on equating the respective entries gives $BU=U^{-1}B,\,CV=U^{-1}C\,$ and $EV=V^{-1}E$. Now corollary follows using Proposition \ref{P1} and the fact that a unitary operator is self-adjoint if and only if it is an involution, i.e. order two element.
 \end{proof}
 
 \begin{proposition}\label{P3}
  Let $T=\left[ {\begin{array}{cc}
   BU & CV \\
   C^*U & EV \\
  \end{array} } \right] \in \mathcal{G}$, then $T$ is unitary if and only if $C \equiv 0$.
 \end{proposition}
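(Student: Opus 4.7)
The plan is to prove the equivalence by using the identities in (\ref{eq:19}) together with the block-form computations of $TT^{*}$ already obtained in the proof of Proposition \ref{P1}.

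For the \emph{sufficiency} direction, I would assume $C \equiv 0$. Then the relations in (\ref{eq:19}) immediately force $B^{2} = I$ and $E^{2} = I$. Since $B = (I-AA^{*})^{-1/2}$ and $E = (I-A^{*}A)^{-1/2}$ are positive operators (being positive square roots of invertible positive operators), the equalities $B^{2} = I$ and $E^{2} = I$ yield $B = I$ and $E = I$. Consequently, $T$ collapses to the block-diagonal form $\left[\begin{array}{cc} U & 0 \\ 0 & V \end{array}\right]$, and since $U \in U(H)$ and $V \in U(K)$, this matrix is manifestly unitary on $H \oplus K$.

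For the \emph{necessity} direction, I would start from the computation of $TT^{*}$ already carried out in the proof of Proposition \ref{P1}:
\[
TT^{*} = \left[\begin{array}{cc} B^{2}+CC^{*} & BC+CE \\ C^{*}B+EC^{*} & C^{*}C+E^{2} \end{array}\right].
\]
If $T$ is unitary then $TT^{*} = I$, so comparing the $(1,1)$-entry gives $B^{2}+CC^{*} = I$. Combining with $B^{2}-CC^{*} = I$ from (\ref{eq:19}) and subtracting yields $2CC^{*} = 0$, hence $CC^{*} = 0$. Since $\langle CC^{*}x,x\rangle = \|C^{*}x\|^{2}$, this forces $C^{*} = 0$ and therefore $C = 0$. (As a cross-check, the $(2,2)$-entry $C^{*}C+E^{2}=I$ together with $E^{2}-C^{*}C=I$ gives the same conclusion $C^{*}C = 0$, and the off-diagonal vanishing $BC+CE = 0$ combined with $BC = CE$ from (\ref{eq:19}) gives $BC = 0$, which likewise forces $C = 0$ because $B$ is invertible.)

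There is no real obstacle in this argument; the only subtlety is recognising that the combination of the unitarity condition with the defining relations (\ref{eq:19}) of $\mathcal{G}$ gives a pair of equations that are solvable only when $C$ vanishes, and using the positivity of $B$ and $E$ to pin down $B = E = I$ in the sufficiency direction.
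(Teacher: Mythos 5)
Your proof is correct, but it takes a different route from the paper. The paper's argument is a direct entrywise comparison of $T^{*}$ with $T^{-1}$, using the explicit inverse formula valid for elements of $\mathcal{G}$ (namely $T^{-1}=\left[\begin{smallmatrix} B^{*} & -D^{*}\\ -C^{*} & E^{*}\end{smallmatrix}\right]$ for $T=\left[\begin{smallmatrix} B & C\\ D & E\end{smallmatrix}\right]$): the diagonal blocks of $T^{*}$ and $T^{-1}$ agree automatically, and the off-diagonal blocks differ only by a sign, so $T^{*}=T^{-1}$ holds precisely when $C\equiv 0$ --- both directions fall out of a single comparison. You instead split the equivalence: for necessity you impose $TT^{*}=I$ on the product computed in Proposition \ref{P1} and play $B^{2}+CC^{*}=I$ against $B^{2}-CC^{*}=I$ from (\ref{eq:19}) to force $CC^{*}=0$; for sufficiency you use the positivity of $B$ and $E$ to get $B=E=I$ and exhibit $T$ as block-diagonal unitary. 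Your sufficiency step essentially reproduces (and slightly sharpens, via positivity) the reasoning the paper defers to the corollary immediately following the proposition, where $B$ and $E$ are shown to be self-adjoint involutions. What the paper's approach buys is brevity --- one comparison settles both directions; what yours buys is that it does not rely on the inverse formula quoted from Franzoni, only on the relations (\ref{eq:19}) and the adjoint computation already on the page. One small point worth making explicit in your necessity direction: $TT^{*}=I$ is equivalent to unitarity here only because $T\in\mathcal{G}$ is already known to be bijective, so that a one-sided inverse is the inverse.
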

 \begin{proof}
 For $T=\left[ {\begin{array}{cc}
   BU & CV \\
   C^*U & EV \\
  \end{array} } \right] \in \mathcal{G}$, $T$ is unitary if and only if $T^*=T^{-1}$ , i.e. $\left[ {\begin{array}{cc}
   U^{-1}B & U^{-1}C \\
   V^{-1}C^* & V^{-1}E \\
  \end{array} } \right]=\left[ {\begin{array}{cc}
   U^{-1}B & -U^{-1}C \\
   -V^{-1}C^* & V^{-1}E \\
  \end{array} } \right]$. It is easy to see that respective entries are equal if and only if  $C \equiv 0$.  
  \end{proof}
  \begin{corollary}
   A unitary operator in $\mathcal{G}$ is of the form  $\left[ {\begin{array}{cc}
   W & 0 \\
   0 & W' \\
  \end{array} } \right]$ for some $W \in U(H)$ and $W' \in U(K)$.
  \end{corollary}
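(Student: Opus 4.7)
The plan is to invoke Proposition \ref{P3} directly, since that proposition already characterizes the unitary elements of $\mathcal{G}$ via the single condition $C \equiv 0$. The remaining work is simply to read off what the general form of $T$ from Proposition \ref{p} collapses to under this constraint.

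Concretely, the first step is to apply Proposition \ref{P3} to conclude $C \equiv 0$. The second step is to deduce that $B = I$ and $E = I$: this can be carried out either by noting that $C = (I-AA^*)^{-1/2}A$ with $(I-AA^*)^{-1/2}$ invertible forces $A = 0$ (whence the definitions $B = (I-AA^*)^{-1/2}$ and $E = (I-A^*A)^{-1/2}$ immediately reduce to the identity), or equivalently by substituting $C = 0$ into the structural relations \eqref{eq:19} to obtain $B^2 = E^2 = I$ and then invoking positivity of $B$ and $E$ to conclude $B = E = I$.

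The final step is to substitute $B = I$, $C = 0$, $E = I$ into the block form $T = \bigl[\begin{smallmatrix} BU & CV \\ C^*U & EV \end{smallmatrix}\bigr]$ of Proposition \ref{p}; this immediately yields a block-diagonal operator with $U$ in the upper-left and $V$ in the lower-right entry, so that setting $W := U \in U(H)$ and $W' := V \in U(K)$ delivers the claimed form. No genuine obstacle is anticipated: the corollary is essentially a direct reformulation of Proposition \ref{P3} once the structural parameters $B$, $C$, $E$ are evaluated at $A = 0$.
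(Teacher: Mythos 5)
Your proposal is correct and follows essentially the same route as the paper: invoke Proposition \ref{P3} to get $C\equiv 0$, feed this into the relations \eqref{eq:19} to get $B^2=E^2=I$, and read off the block-diagonal form. The only (immaterial) difference is that you use positivity to conclude $B=E=I$ and hence $W=U$, $W'=V$, whereas the paper stops at observing that $B$ and $E$ are self-adjoint involutions, hence unitary, and sets $W=BU$, $W'=EV$.
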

  \begin{proof}
    Let $T=\left[{\begin{array}{cc}
   BU & CV \\
   C^*U & EV \\
  \end{array}}\right] \in \mathcal{G}$ be a unitary operator. Then $T=\left[ {\begin{array}{cc}
   BU & 0 \\
   0 & EV \\
  \end{array} } \right]$ where   $B$ and $E$ are involution operators by (\ref{eq:19}). Hence  $B$ and $E$ are unitary as these are  self-adjoint also. So $T=\left[ {\begin{array}{cc}
   W & 0 \\
   0 & W' \\
  \end{array} } \right]$ where $W=BU$ and $W'=EV$.
  \end{proof}

\section{Normal isometries for \pmb{$\mathcal{B} \subseteq B(\mathbb{C}^n,H)$}} 
 
 \textbf{\textit{From now onwards, we will assume}  \pmb{$K=\mathbb{C}^n$}} \textbf{\textit{and a normal isometry would mean  non-unitary normal isometry}}.
  
  \medskip
 Let us understand the general form of elements of $\mathcal{G}$  more explicitly for finite dimensional \(K\).
 
From the proof of Theorem \ref{c}, $E=(I-A^*A)^{-1/2}$ is a positive invertible operator where $A \in \mathcal{B}$. Since $\|A^*A\|<1,\, \sigma(A^*A) \in [0,1)$. This implies $\sigma(I-A^*A) \in (0,1]$ and hence  $\sigma(E) \in [1, \infty)$.  Thus for $T=\left[ {\begin{array}{cc}
   BU & CV \\
   C^*U & EV \\
  \end{array} } \right] \in \mathcal{G}$,  \(E\) and \(B\) are positive invertible operators with \(E\) having eigenvalues $\geq$ 1. Let $E$ decompose the space $\mathbb{C}^n$ orthogonally into eigenspaces as $\mathbb{C}^n=\overset{p}{\underset{i=1}{\oplus}}K_i$ such that $E\restriction_{K_i}=a_i\,I$, $a_i \geq 1$.\\
  Let $dim\,(K_i)=k_i$ and let $\beta_i=\{e_{{i}_{j}},\,\,j=1,\,2,\,...,\,k_i\}$ be an orthonormal basis of $K_i$ for each $i=1,\,2,\,...,\,p$.\\
  Let $C(e_{i_j})=\xi_{i_j}$. This implies $C^*(h)=\sum\limits_{r=1}^{p}\sum \limits _{s=1}^{k_r}{\left<h,\xi_{r_s}\right>\,e_{r_s}}$. \\
  The relation $E^2-C^*C=I$ in (\ref{eq:19}) yields 
   \[({a_i}^2-1)\,e_{i_j}=\sum\limits_{r=1}^{p}\sum \limits _{s=1}^{k_r}{\left<\xi_{i_j},\xi_{r_s}\right>\,e_{r_s}}.\]
  Taking inner product in the above equation with $e_{i_j}$ gives ${a_i}^2=1+\|\xi_{i_j}\|^2$ which further implies that for all $j,\,j'\in\{1,\,2,\,...,\,k_i\}$, $\|\xi_{i_j}\|=\|\xi_{i_{j'}}\|=\delta_i$ (say). Hence ${a_i}^2=1+{\delta_i}^2$ for all $i=1,\,2,\,...,\,p$.\\
  Now take $r_s \neq i_j$ and take inner product in the same equation with $e_{r_s}$, we get $\left<\xi_{i_j},\xi_{r_s}\right>=0$. This gives $\xi_{i_j}=\xi_{r_s}$, $i_j \neq r_s$ only if $\xi_{i_j}=0$, i.e. non-zero  ${\xi_{i_j}}$'s are distinct and are mutually orthogonal. Also some of the  ${\xi_{i_j}}$'s may be zero as well  and if $\xi_{i_j}=0$ then $\xi_{i_{j'}}=0$ for all $j'\in\{1,\,2,\,...,\,k_i\}$. This will make the corresponding $a_i=1$. \\
  Let us re-define the orthogonal decomposition of $\mathbb{C}^n$ via $E$ as follows.\\
  Out of $p$ number of eigenspaces of $E$ let the first $l$ number of eigenspaces ($l \leq p$) be such that $C$ restricted to these is throughout non-zero and it vanishes on the orthogonal complement. This makes the eigensvalues of $E$ strictly bigger than 1 for these $l$ eigenspaces and $E=I$ on the orthogonal complement.  \\
  Let $K'= \overset{l}{\underset{i=1}{\oplus}}{K_i}$. So, $\mathbb{C}^n=K' \oplus {K'}^{\perp}$ where $E\restriction_{K_i}=a_i\,I$, $a_i > 1$, $i=1,\,2,\,...,\,l$ and  $E\restriction_{{K'}^{\perp}}=I.$ Let $dim\,K'=k$ which gives $\sum\limits_{i=1}^{l}k_i=k$.\\
  Hence $C\restriction_{{K'}^{\perp}}\equiv 0$ and due to mutual orthogonality of ${\xi_{i_j}}$'s, $C\restriction_{K'}$ becomes bijective  where $C(e_{i_j})=\xi_{i_j}$,  $i=1,\,2,\,...,\,l$ and $j =1,\,2,\,...,\,k_i$. Hence \\
  $C^*(\xi_{i_j})={\delta_i}^2\,e_{i_j},\,\,j=1,\,2,\,...,\,k_i,\,\,i=1,\,2,\,...,\,l$ and
       $C^* \equiv 0$ on  $(ran\,C)^{\perp}$ where $ran\,C$ denotes the range of $C$.\\
       At last, the relation $B^2-CC^*=I$ yields 
  $B(\xi_{i_j})=a_i\,\xi_{i_j},\,\,j=1,\,2,\,...,\,k_i,\,\,i=1,\,2,\,...,\,l$ and 
       $B = I$ on $(ran\,C)^{\perp}.$\\
       Let $M_i=span\,\{\xi_{i_j},\,\,j=1,\,2,\,...,\,k_i\}$ for each $i=1,\,2,\,...,\,l$. So  $\overset{l}{\underset{i=1}{\oplus}}{M_i}=ran\,C$  and $B$ decomposes $H$ orthogonally into eigenspaces as $H=ran\,C \oplus {(ran\,C)}^{\perp}$ where $B\restriction_{M_i}=a_i\,I$. Also $dim\,M_i=dim\,K_i=k_i$. Let ${\beta_i}'=\{\xi_{i_j},\,\,j=1,\,2,\,...,\,k_i\} $ which is  an orthogonal basis of $M_i$ for each $i=1,\,2,\,...,\,l$.\\
       
   \textit{\textbf{In the rest of the article we reserve the notations \pmb{$I$} and \pmb{$J_i$} for the sets \pmb{$\{1,\,2,\,...,\,l\}$} and \pmb{$\{1,\,2,\,...,\,k_i\}$}, respectively where \pmb{$i \in I$}}}.\\
  As a summary of the preceding analysis, we have the following information. 
  \begin{remark}\label{r6}
Let $T=\left[ {\begin{array}{cc}
   BU & CV \\
   C^*U & EV \\
  \end{array} } \right] \in \mathcal{G}$. Then  $E$ decomposes $\mathbb{C}^n$ orthogonally into eigenspaces as $\mathbb{C}^n=\overset{l}{\underset{i=1}{\oplus}}{K_i} \oplus {K'}^{\perp}$ where 
  \begin{eqnarray}\label{eq:33}
  K'= \overset{l}{\underset{i=1}{\oplus}}{K_i},\,\,\text{and}
  \end{eqnarray}
  \begin{equation}\label{eq:28}
      \begin{aligned}
       E\restriction_{K_i}&=a_i \,I, \,\, a_i > 1,\\
       E\restriction_{{K'}^{\perp}}&=I.
    \end{aligned}
  \end{equation}
  Let $dim\,(K')=k$ and   $dim\,(K_i)=k_i$ for each $i \in I$,  hence
  \begin{eqnarray*}
  \sum\limits_{i=1}^{l}{k_i}=k.
  \end{eqnarray*}
  \begin{eqnarray}\label{eq:24}
  \beta_i=\{e_{i_j},\,\,j \in J_i\} 
  \end{eqnarray}
  is an orthonormal basis of $K_i$ for each $i \in I$.
  \begin{equation}\label{eq:29}
      \begin{aligned}
       C(e_{i_j})&=\xi_{i_j} (\neq 0),\,\,j \in J_i,\,\,i \in I,\\
       C & \equiv 0\,\,\text{on}\,\,{K'}^{\perp}.
    \end{aligned}
  \end{equation}
 As  ${\xi_{i_j}}$'s are distinct and  mutually orthogonal,  $C\restriction_{K'}$ is a  bijective operator and $dim\,(ran\,C)=k=dim\,K'$.\\
  Also $\|\xi_{i_j}\|=\|\xi_{i_{j'}}\|$ for every $j,\,j' \in K_i$. Let
  \begin{eqnarray}\label{eq:26}
  \|\xi_{i_j}\|=\delta_i,\,\, j \in J_i.
  \end{eqnarray}
  Hence 
  \begin{eqnarray}\label{eq:39}
  {a_i}^2=1+{\delta_i}^2.
  \end{eqnarray}
   \begin{equation}\label{eq:30}
      \begin{aligned}
       C^*(\xi_{i_j})&={\delta_i}^2e_{i_j},\,\,j \in J_i,\,\,i \in I,\\
       C^* &\equiv 0\,\,\text{on}\,\,  (ran\,C)^{\perp}.
    \end{aligned}
  \end{equation}
   Let $M_i=span\,\{\xi_{i_j},\,\,j \in J_i\}$ for each $i \in I$. Then  
   \begin{eqnarray}\label{eq:32}
  ran\,C=\overset{l}{\underset{i=1}{\oplus}}M_i
  \end{eqnarray}
  where $dim\,(M_i)=k_i=dim\,(K_i)$. 
  \begin{eqnarray}\label{eq:27}
  {\beta_i}'=\{\xi_{i_j},\,\,j \in J_i\} 
  \end{eqnarray}
  is an orthogonal basis of $M_i$ for each $i \in I$. $B$
decomposes $H$ orthogonally as $H=\overset{l}{\underset{i=1}{\oplus}}M_i \oplus (ran\,C)^{\perp}$ where 
\begin{equation}\label{eq:31}
      \begin{aligned}
       B\restriction_{M_i}&=a_i\,I,\,\,j \in J_i,\,\,i\in I,\\
       B &= I\,\,\text{on}\,\, (ran\,C)^{\perp}.
    \end{aligned}
  \end{equation}
  \end{remark}
 
 Now we are ready to characterize non-unitary normal isometries for $K=\mathbb{C}^n$. \\\\
  
 \textbf{Proof of Proposition \ref{h}.} 
  Let $T$ be a non-unitary normal isometry. Then in view of Proposition \ref{P1},  $UB=BU$  and  $VE=EV$ which implies  $U(M_i)=M_i$ and $V(K_i)=K_i$ using (\ref{eq:31}) and (\ref{eq:28}) respectively. Also $UC(e_{i_j})=U(\xi_{i_j})$ by (\ref{eq:29}). As $U(M_i)=M_i$, let $U(\xi_{i_j})=\sum\limits_{p=1}^{k_i}{{\alpha_{j_p}}^{(i)}}{\xi_{i_p}}$ using (\ref{eq:27}) where ${{\alpha_{j_p}}^{(i)}}\in \mathbb{C}$. Similarly, as $V(K_i)=K_i$, let  $V(e_{i_j})=\sum\limits_{p=1}^{k_i}{{\gamma_{j_p}}^{(i)}}{e_{i_p}}$ using (\ref{eq:24}) where ${{\gamma_{j_p}}^{(i)}}\in \mathbb{C}$. This gives $CV(e_{i_j})=\sum\limits_{p=1}^{k_i}{{\gamma_{j_p}}^{(i)}}{\xi_{i_p}}$. As $UC=CV$, $UC(e_{i_{j}})=CV(e_{i_{j}})$ which implies ${{\alpha_{j_p}}^{(i)}}={{\gamma_{j_p}}^{(i)}}$ for all $i \in I,\,j,\,p\in J_i$ using mutual orthogonality of distinct ${\xi_{i_{p}}}$'s. Hence
  \begin{eqnarray}\label{eq.}
  [U(\xi_{i_j})]_{{\beta_i}'}=[V(e_{i_j})]_{{\beta_i}}
  \end{eqnarray}
  and   ${[U\restriction_{M_i}]}_{{\beta_i}'}={[V\restriction_{K_i}]}_{{\beta_i}}$ for every $i \in I$.\\
  Conversely, tracing back the steps of forward part together with the facts that $B=I$ on $(ran\,C)^{\perp}$, $E=I$ on ${K'}^{\perp}$ and $C\equiv 0$ on ${K'}^{\perp}$ yield  $UB=BU$, $VE=EV$ and $UC=CV$. 
  
 \begin{corollary}\label{e}
  If $T=\left[ {\begin{array}{cc}
   BU & CV \\
   C^*U & EV \\
  \end{array} } \right] \in \mathcal{G}$ is a non-unitary normal isometry then $T=\overset{l}{\underset{i=1}{\oplus}}{T_i} \oplus T' \oplus T''$ where $T_i=T \restriction_{M_i \oplus K_i}$, $i \in I$, $T'=U\restriction_{{(ran\,C)}^{\perp}}$ and $T''=V\restriction_{{K'}^{\perp}}$.
  \end{corollary}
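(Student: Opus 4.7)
The plan is to exhibit a $T$-invariant orthogonal decomposition of $H \oplus \mathbb{C}^n$, compute the restriction of $T$ to each summand, and identify these restrictions with $T_i$, $T'$, and $T''$. Using the setup of Remark \ref{r6}, the ambient space decomposes as
\[
H \oplus \mathbb{C}^n \;=\; \bigoplus_{i \in I} \bigl( M_i \oplus K_i \bigr) \;\oplus\; (ran\,C)^\perp \;\oplus\; {K'}^\perp ,
\]
so it is enough to verify that each of the summands $M_i \oplus K_i$, $(ran\,C)^\perp$, and ${K'}^\perp$ is invariant under $T$.

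For the summands $M_i \oplus K_i$, I would pick $m \in M_i$ and $k \in K_i$ and compute $T(m,k) = (BUm + CVk,\, C^*Um + EVk)$. By Proposition \ref{h}, $U(M_i) = M_i$ and $V(K_i) = K_i$, so $Um \in M_i$ and $Vk \in K_i$. The eigenspace description of $B$ and $E$ in (\ref{eq:31}) and (\ref{eq:28}) then gives $BUm = a_i Um \in M_i$ and $EVk = a_i Vk \in K_i$, while (\ref{eq:29}) and (\ref{eq:30}) give $CVk \in M_i$ and $C^*Um \in K_i$. Hence $T(M_i \oplus K_i) \subseteq M_i \oplus K_i$, and this is exactly the restriction labeled $T_i$.

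For the remaining two summands, the key observation is that invariance propagates to orthogonal complements because $U$ and $V$ are unitary. Concretely, since $U(M_i) = M_i$ for every $i \in I$ and $ran\,C = \bigoplus_i M_i$ by (\ref{eq:32}), unitarity of $U$ forces $U\bigl((ran\,C)^\perp\bigr) = (ran\,C)^\perp$; analogously $V({K'}^\perp) = {K'}^\perp$ using (\ref{eq:33}). For $h \in (ran\,C)^\perp$, then $Uh \in (ran\,C)^\perp$, so by the second halves of (\ref{eq:30}) and (\ref{eq:31}) we have $C^*Uh = 0$ and $BUh = Uh$; thus $T(h,0) = (Uh,0)$, giving the summand $T' = U\restriction_{(ran\,C)^\perp}$. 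An identical argument using $C\restriction_{{K'}^\perp} \equiv 0$ from (\ref{eq:29}) and $E\restriction_{{K'}^\perp} = I$ from (\ref{eq:28}) yields $T(0,k) = (0,Vk)$ for $k \in {K'}^\perp$, producing $T'' = V\restriction_{{K'}^\perp}$.

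The main (minor) subtlety is the propagation of invariance from the $M_i$ to $(ran\,C)^\perp$; once this is in hand, every other step is a direct substitution into the block matrix using the spectral structure of $B$, $C$, $E$. Assembling the three invariant pieces produces the claimed orthogonal direct sum decomposition $T = \bigoplus_{i=1}^{l} T_i \oplus T' \oplus T''$.
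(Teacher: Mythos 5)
Your proposal is correct and follows essentially the same route as the paper: verify that each summand $M_i \oplus K_i$, $(ran\,C)^\perp$, and ${K'}^\perp$ is $T$-invariant by direct block-matrix computation using Proposition \ref{h} and the spectral descriptions of $B$, $C$, $E$ from Remark \ref{r6}. Your explicit justification that $U$ preserves $(ran\,C)^\perp$ (via unitarity and $U(M_i)=M_i$) is exactly what the paper compresses into the phrase ``$ran\,C$ reduces $U$.''
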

  \begin{proof}
  Note that  ${\beta_i}' \cup \beta_i=\{(\xi_{i_j},0),\,\,\,\,j\in J_i\}\cup \{(0,e_{i_j}),\,\,\,\,j\in J_i\}$ is a basis of $M_i \oplus K_i$ which follows from (\ref{eq:27}) and (\ref{eq:24}). Now using (\ref{eq:31}), (\ref{eq:30}) and (\ref{eq.}), we get \\
  
 $ \left[ {\begin{array}{cc}
   BU & CV \\
   C^*U & EV \\
  \end{array} } \right]\left[ {\begin{array}{c}
   \xi_{i_j} \\
  0 \\
  \end{array} } \right]=\left[ {\begin{array}{c}
   BU(\xi_{i_j}) \\
  C^*U(\xi_{i_j}) \\
  \end{array} } \right]=\left[ {\begin{array}{c}
   a_iU(\xi_{i_j}) \\
  {\delta_i}^2V(e_{i_j}) \\
  \end{array} } \right]$ $\in M_i \oplus K_i$.\\
  
 Similarly, the fact that $UC=CV$, (\ref{eq:29}) and (\ref{eq:28}) give\\
 
 $\left[ {\begin{array}{cc}
   BU & CV \\
   C^*U & EV \\
  \end{array} } \right]\left[ {\begin{array}{c}
   0 \\
  e_{i_j} \\
  \end{array} } \right]=\left[ {\begin{array}{c}
   CV(e_{i_j}) \\
  EV(e_{i_j}) \\
  \end{array} } \right]=\left[ {\begin{array}{c}
   U(\xi_{i_j}) \\
  a_iV(e_{i_j}) \\
  \end{array} } \right] \in M_i \oplus K_i$. \\
  
  This implies $T(M_i \oplus K_i)=M_i \oplus K_i$ as $T$ is one-one.  For $h \in (ran\,C)^{\perp}$, the fact that $ran\,C$ reduces $U$ together with (\ref{eq:31}) and (\ref{eq:30}) yield\\ 
  
  $\left[ {\begin{array}{cc}
   BU & CV \\
   C^*U & EV \\
  \end{array} } \right]\left[ {\begin{array}{c}
   h \\
  0 \\
  \end{array} } \right]=\left[ {\begin{array}{c}
   U(h) \\
  0 \\
  \end{array} } \right] \in (ran\,C)^{\perp}$\\
  
  implying  $T((ran\,C)^{\perp})=(ran\,C)^{\perp}$. Let $T'=T\restriction_{(ran\,C)^{\perp}}=U\restriction_{(ran\,C)^{\perp}}$. \\
  Also for $\bs{\mr{z}} \in {K'}^{\perp}$, the fact that $K'$ reduces $V$ together with (\ref{eq:29}) and (\ref{eq:28}) imply
  $\left[ {\begin{array}{cc}
   BU & CV \\
   C^*U & EV \\
  \end{array} } \right]\left[ {\begin{array}{c}
   0 \\
   \bs{\mr{z}}\\
  \end{array} } \right]=\left[ {\begin{array}{c}
   0 \\
  V(\bs{\mr{z}}) \\
  \end{array} } \right] \in {K'}^{\perp}$
  giving $T({K'}^{\perp})={K'}^{\perp}$. Let $T''=T\restriction_{{K'}^{\perp}}=V\restriction_{{K'}^{\perp}}$.
  Hence we have $T=\overset{l}{\underset{i=1}{\oplus}}{T_i} \oplus T' \oplus T''$.
  \end{proof}
  \begin{corollary}\label{c2}
   $[T_i]_{{\beta_i}' \cup {\beta_i}}=\left[ {\begin{array}{cc}
   a_iR_i & R_i \\
   {\delta_i}^2 R_i & a_i R_i \\
  \end{array} } \right]_{2k_i \times 2k_i}$ where $R_i={[U\restriction_{M_i}]}_{{\beta_i}'}={[V\restriction_{K_i}]}_{{\beta_i}}$.
\end{corollary}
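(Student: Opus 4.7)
The plan is to extract the matrix of $T_i$ directly from the column vectors already computed in the proof of Corollary \ref{e}. Recall that $T_i = T\restriction_{M_i \oplus K_i}$ and the ordered basis under consideration is
\[ {\beta_i}' \cup \beta_i \;=\; \{(\xi_{i_j},0) : j \in J_i\} \;\cup\; \{(0,e_{i_j}) : j \in J_i\}. \]
The proof of Corollary \ref{e} already establishes the two families of column expressions
\[ T\bigl(\xi_{i_j},0\bigr) \;=\; \bigl(a_i\,U(\xi_{i_j}),\,\delta_i^2\,V(e_{i_j})\bigr), \qquad T\bigl(0,e_{i_j}\bigr) \;=\; \bigl(U(\xi_{i_j}),\,a_i\,V(e_{i_j})\bigr), \]
so the task reduces to expanding $U(\xi_{i_j})$ and $V(e_{i_j})$ in the bases ${\beta_i}'$ and ${\beta_i}$, respectively.

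For this, I would invoke equation (\ref{eq.}), which asserts $[U(\xi_{i_j})]_{{\beta_i}'} = [V(e_{i_j})]_{{\beta_i}}$. Writing $R_i = (\alpha_{j_p}^{(i)})$, so that
\[ U(\xi_{i_j}) \;=\; \sum_{p \in J_i} \alpha_{j_p}^{(i)}\,\xi_{i_p}, \qquad V(e_{i_j}) \;=\; \sum_{p \in J_i} \alpha_{j_p}^{(i)}\,e_{i_p}, \]
one substitutes into the two column formulas above and reads off the coordinates against the ordered basis ${\beta_i}' \cup {\beta_i}$. The $j$-th column (coming from $(\xi_{i_j},0)$) contributes $a_i \alpha_{j_p}^{(i)}$ in the $\xi_{i_p}$ slots and $\delta_i^2 \alpha_{j_p}^{(i)}$ in the $e_{i_p}$ slots; the $(k_i+j)$-th column (coming from $(0,e_{i_j})$) contributes $\alpha_{j_p}^{(i)}$ in the $\xi_{i_p}$ slots and $a_i \alpha_{j_p}^{(i)}$ in the $e_{i_p}$ slots. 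Assembling these into a $2k_i \times 2k_i$ block matrix yields exactly
\[ \begin{pmatrix} a_i R_i & R_i \\ \delta_i^2 R_i & a_i R_i \end{pmatrix}, \]
as claimed.

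There is really no serious obstacle here: the corollary is a bookkeeping consequence of Corollary \ref{e} together with the identification (\ref{eq.}). The only mild subtlety is keeping the ordering of the basis ${\beta_i}' \cup {\beta_i}$ consistent, so that the upper-left block corresponds to the $M_i$-to-$M_i$ action (scaling by $a_i$ composed with $U\restriction_{M_i}$) and the lower-right to the $K_i$-to-$K_i$ action (scaling by $a_i$ composed with $V\restriction_{K_i}$), both of which have the common matrix representation $R_i$.
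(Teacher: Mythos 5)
Your proposal is correct and follows essentially the same route as the paper: both read off the columns of $[T_i]_{{\beta_i}'\cup\beta_i}$ from the expressions for $T(\xi_{i_j},0)$ and $T(0,e_{i_j})$ computed in Corollary \ref{e}, expand $U(\xi_{i_j})$ and $V(e_{i_j})$ via the common coefficient matrix supplied by (\ref{eq.}), and assemble the four $k_i\times k_i$ blocks. The only difference is cosmetic — the paper records the entries as inner products against the basis vectors while you read off coordinates directly, which is if anything cleaner since ${\beta_i}'$ is only orthogonal, not orthonormal.
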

\begin{proof}
Using the expressions of $T(\xi_{i_{j}},0)$ and $T(0,e_{i_{j}})$ from the proof of Corollary \ref{e} and the expressions of $U(\xi_{i_{j}})$ and $V(e_{i_{j}})$ from the proof of Proposition \ref{h} along with (\ref{eq.}), we get 
\[[T_i]_{{\beta_i}' \cup {\beta_i}}=[{t_{r_s}}^{(i)}]_{2k_i \times 2k_i}\]
where $r$ is the number of rows and $s$ is the number of columns and
\begin{align*}
{t_{r_s}}^{(i)} =
\left\{
	\begin{array}{llll}
		\left<T(\xi_{i_{s}},0),(\xi_{i_{r}},0)\right>  & \mbox{if } 1 \leq r,s \leq k_i, \\
		\left<T(\xi_{i_{s}},0),(0,e_{i_{p}})\right> & \mbox{if } r=k_i+p,\, 1 \leq p \leq k_i\,\, \mbox{and}\,\, 1 \leq s \leq k_i, \\
		\left<T(0,e_{i_{p}}),(\xi_{i_{r}},0)\right> & \mbox{if }  1 \leq r \leq k_i \,\, \mbox{and}\,\,s=k_i+p,\, 1 \leq p \leq k_i, \\
		\left<T(0,e_{i_{p}}),(0,e_{i_{m}})\right> & \mbox{if }  r=k_i+m \,\, \mbox{and}\,\,s=k_i+p,\, 1 \leq m,p \leq k_i.
	\end{array}
\right.
    \end{align*}
    The above analysis in compact form gives \\
    $[T_i]_{{\beta_i}' \cup {\beta_i}}=\left[ {\begin{array}{cc}
   a_iR_i & R_i \\
   {\delta_i}^2 R_i & a_i R_i \\
  \end{array} } \right]_{2k_i \times 2k_i}$ where $R_i={[U\restriction_{M_i}]}_{{\beta_i}'}={[V\restriction_{K_i}]}_{{\beta_i}}$.

\end{proof}
  Next we describe the unitary elements in $\mathcal{G}$ for $K=\mathbb{C}^n$.\\\\
  
 \textbf{Proof of Proposition \ref{z}.}
Let $T=\left[ {\begin{array}{cc}
   BU & CV \\
   C^*U & EV \\
  \end{array} } \right] \in \mathcal{G}$ be a unitary operator. By Proposition \ref{P3},  $C\equiv 0$. This would make $(ran\,C)^{\perp}=H$ and hence $B=I$ on $H$ by (\ref{eq:31}). Also as $\xi_{i_j}=0$ for all $i\in I$ and $j \in J_i$, we have $a_i=1$ for every $i \in I$ using (\ref{eq:26}) and (\ref{eq:39}), hence    $E=I$ by  (\ref{eq:28}). Thus $T=\left[ {\begin{array}{cc}
   U & 0 \\
   0 & V \\
  \end{array} } \right]$.
 
  \section{Fixed points of non-unitary normal isometries for \pmb{$\mathcal{B} \subseteq B(\mathbb{C}^n,H)$}}
 Due to a result of Kaup \cite{KA}, every holomorphic automorphism on $\mathcal{B}$ is the restriction of a holomorphic map defined in some neighbourhood of $\overline{\mathcal{B}}$ taking values in $B(K,H)$. What follows is a  result from \cite{FR} which guarantees existence of fixed points in $\overline{\mathcal{B}}$. 
  \begin{theorem}\cite[Corollary 2.4]{FR}  Every holomorphic automorphism of the open unit ball $\mathcal{B} $ in $B(\mathbb{C}^n,H)$ has a fixed point in $\overline{\mathcal{B}}$.
  \end{theorem}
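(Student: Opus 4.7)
The plan is to use the linear representation from Theorem \ref{c} together with a Schauder--Tychonoff fixed point argument. By Kaup's theorem quoted just before the statement, every $\psi(T)\in\mathrm{Aut}(\mathcal{B})$ extends continuously to a neighbourhood of $\overline{\mathcal{B}}$; in the linear representation this extension is $A\mapsto (BUA+CV)(C^*UA+EV)^{-1}$, which is well-defined (the denominator being invertible) on $\overline{\mathcal{B}}$, and continuity forces it to map $\overline{\mathcal{B}}$ into itself.

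Next I would equip $\overline{\mathcal{B}}$ with a compact convex topology. Fixing an orthonormal basis $\{e_1,\ldots,e_n\}$ of $\mathbb{C}^n$, identify $B(\mathbb{C}^n,H)$ with $H^n$ via $A\mapsto (Ae_1,\ldots,Ae_n)$ and give $H^n$ the product of the weak topologies on each factor. Banach--Alaoglu applied in each coordinate, together with the weak lower semi-continuity of the Hilbert norm applied to each $A\boldsymbol{\mathrm{k}}$, imply that $\overline{\mathcal{B}}=\{A:\|A\|_{\mathrm{op}}\le 1\}$ is a compact, convex subset of this Hausdorff locally convex space.

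The crucial step is continuity of the extended $\psi(T)$ in this weak topology. The numerator $A\mapsto BUA+CV$ is affine in $A$ and each of its coordinates depends weakly continuously on $Ae_i$ (because $BU$ is bounded linear), hence it is weak-to-weak continuous. For the denominator the key observation is that $\mathrm{ran}\,C$ has finite dimension $k\le n$, so $C^*:H\to\mathbb{C}^n$ has finite rank; therefore the map $A\mapsto C^*UA$ is weak-to-norm continuous into the finite-dimensional space $B(\mathbb{C}^n,\mathbb{C}^n)$. Kaup's extension ensures $C^*UA+EV$ remains invertible on all of $\overline{\mathcal{B}}$, so $(C^*UA+EV)^{-1}$ is a norm-continuous function of $A$; composing the weak-continuous numerator with the norm-continuous inverse yields weak continuity of $\psi(T)$. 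The Schauder--Tychonoff theorem, applied to the continuous self-map $\psi(T)$ of the compact convex set $\overline{\mathcal{B}}$ inside a locally convex Hausdorff TVS, then produces the desired fixed point.

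The main obstacle is precisely this continuity bridge between the weak product topology (in which compactness of $\overline{\mathcal{B}}$ holds) and the operator-norm topology (in which Kaup's extension is given). The finite dimensionality of $K=\mathbb{C}^n$, and equivalently the finite rank of $C^*$, is what collapses the denominator into a finite-dimensional setting where the weak and norm topologies coincide; without this reduction, inverting $C^*UA+EV$ weakly-continuously in $A$ would fail, and the Schauder--Tychonoff argument would break down.
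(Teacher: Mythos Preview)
The paper does not supply a proof of this statement: it is quoted as Corollary~2.4 of Franzoni~\cite{FR} and used as a black box, with the authors' own contribution beginning only at the subsequent lemma on invertibility of $C^*A+E$. So there is no ``paper's proof'' to compare your attempt against.

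Your Schauder--Tychonoff argument is sound in outline and is in the spirit of Franzoni's original approach; the identification of $B(\mathbb{C}^n,H)$ with $H^n$ under the product weak topology, and the observation that finite rank of $C^*$ is exactly what makes the denominator weak-to-norm continuous, are the right ideas. Two points deserve tightening. First, invoking Kaup's extension theorem to justify invertibility of $C^*UA+EV$ on $\overline{\mathcal{B}}$ is indirect: Kaup gives a holomorphic extension of the \emph{automorphism}, not of that particular rational expression, and a removable singularity in the formula is not a~priori excluded. The clean route is the direct estimate $\|E^{-1}C^*\|<1$ (precisely the lemma the paper proves immediately afterwards, whose argument is independent of the present theorem), which yields invertibility of $EV+C^*UA=EV(I+V^{-1}E^{-1}C^*UA)$ for all $\|A\|\le 1$. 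Second, when you compose the weak-continuous numerator with the norm-continuous inverse, you should note explicitly that $A\mapsto BUA+CV$ takes $\overline{\mathcal{B}}$ into a norm-bounded set; the bilinear map $(X,Y)\mapsto XY$ from $B(\mathbb{C}^n,H)\times B(\mathbb{C}^n)$ (weak $\times$ norm) to $B(\mathbb{C}^n,H)$ (weak) is jointly continuous only on sets bounded in the first variable, and that boundedness is what lets the product converge. With these two clarifications your proof is complete.
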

  Also, to have a representation of fixed points for linear isometries, we need to ensure that the expression $(BA+C){(DA+E)}^{-1}$  which appears in Theorem \ref{c} is meaningful for $A \in \overline{\mathcal{B}}$. As (\ref{eq:20}) says $D=C^*$ , we will show that
  \begin{lemma}
  $C^*A+E$ is invertible for $A \in \overline{\mathcal{B}}$.
  \end{lemma}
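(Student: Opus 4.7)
The plan is to exploit the \textbf{self-adjoint factor} appearing in Proposition \ref{p}, namely
\[ T_0 := \left[\begin{array}{cc} B & C \\ C^* & E \end{array}\right]. \]
First I would verify that $T_0 \in \mathcal{G}$: the self-adjointness of $B$ and $E$, the relations in (\ref{eq:19}), and the adjoint of $BC=CE$ (which gives $C^*B=EC^*$) directly yield the six identities (\ref{eq:1}) with $D=C^*$. Consequently $T_0$ is bijective and preserves the indefinite form $\mathcal{S}$.

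Since $C^*A+E$ is a linear endomorphism of the finite-dimensional space $\mathbb{C}^n$, invertibility is equivalent to injectivity, so I only need to show the kernel is trivial. Assume $x\in\mathbb{C}^n$ satisfies $(C^*A+E)x=0$, and set $v:=Ax\in H$ and $w:=x\in\mathbb{C}^n$. Then $C^*v+Ew=0$, so
\[ T_0\!\left(\begin{array}{c} v \\ w \end{array}\right) = \left(\begin{array}{c} Bv+Cw \\ 0 \end{array}\right). \]

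Next I would apply $\mathcal{S}$-invariance of $T_0$:
\[ \|Bv+Cw\|^2 \;=\; \mathcal{S}\bigl(T_0(v,w),T_0(v,w)\bigr) \;=\; \mathcal{S}\bigl((v,w),(v,w)\bigr) \;=\; \|Ax\|^2 - \|x\|^2. \]
The left side is $\geq 0$, while the right side is $\leq 0$ because $A\in\overline{\mathcal{B}}$ forces $\|A\|\leq 1$. Both must therefore vanish, which gives $Bv+Cw=0$ as well. Combined with $C^*v+Ew=0$, this reads $T_0(v,w)=0$, and bijectivity of $T_0$ forces $(v,w)=0$, hence $x=w=0$.

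The only potentially delicate point is the preliminary check that $T_0\in\mathcal{G}$; once that is in hand, the remainder is a one-line application of form invariance. A more computational alternative would use the spectral decomposition of $E$ from Remark \ref{r6} to write $C^*A+E=E(I+E^{-1}C^*A)$ and bound $\|E^{-1}C^*\|^2 = \|I-E^{-2}\| = 1 - a_{\max}^{-2} < 1$, concluding by a Neumann series; but the $\mathcal{S}$-invariance route above is cleaner, coordinate-free, and advertises the underlying structure of $\mathcal{G}$.
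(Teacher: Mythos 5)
Your proof is correct, but it takes a genuinely different route from the paper's. The paper argues via a norm estimate: from $E^2-C^*C=I$ in (\ref{eq:19}) it deduces $\left<E^{-1}C^*CE^{-1}(\mathbf{z}),\mathbf{z}\right><1$ for unit vectors $\mathbf{z}$, invokes compactness of the unit sphere in $\mathbb{C}^n$ to upgrade this to the uniform bound $\|E^{-1}C^*\|<1$, and then inverts $E+C^*A=E(I+E^{-1}C^*A)$ by a Neumann series --- essentially the ``computational alternative'' you sketch in your last paragraph. Your main argument instead exploits the $\mathcal{S}$-invariance of the self-adjoint factor $T_0$; your preliminary check that $T_0\in\mathcal{G}$ is sound and is in fact already available in the paper, since $T_0$ is precisely the element $T_2$ constructed in the proof of Theorem \ref{c} (see also the remark after Proposition \ref{p} on the self-adjoint/unitary factorization). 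The sign argument $0\le\|Bv+Cw\|^2=\|Ax\|^2-\|x\|^2\le 0$ is clean and correct, and bijectivity of $T_0$ then kills the kernel. It is worth noting where each proof uses $\dim K<\infty$: the paper needs compactness of the sphere to turn a pointwise strict inequality into a uniform one, while you need finite-dimensionality only at the final step, to pass from injectivity of $C^*A+E$ to invertibility. The paper's route buys a quantitative bound ($\|E^{-1}C^*A\|<1$, hence control of the inverse), whereas yours is coordinate-free and makes the role of the indefinite form visible; both are valid.
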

  \begin{proof}
  As \(E\) is a positive invertible operator,  for $\textbf{z}\,(\neq 0) \in \mathbb{C}^n$, (\ref{eq:19}) yields\\
  $\left<E^{-1}(E^2-C^*C){E^{-1}}(\textbf{z}),\tb{z}\right>=\left<E^{-1}{E^{-1}}(\tb{z}),\tb{z}\right> >0$. In particular \linebreak
  $\left<E^{-1}C^*C{E^{-1}}(\tb{z}),\tb{z}\right> <1$ for $\|\tb{z}\|=1$. Since the unit sphere in $\mathbb{C}^n$ is a compact set, we have 
  $\underset{\|\tb{z}\|=1}
  {sup}\left<E^{-1}C^*{(E^{-1}C^*)}^*(\tb{z}),\tb{z}\right> <1$. This means $\Big\|E^{-1}C^*\Big\|^2 <1$ and $\Big\|E^{-1}C^*A\Big\| <1$ for $A \in \overline{\mathcal{B}}$ which makes $I+E^{-1}C^*A$ and hence $E+C^*A$ an invertible operator. 
  \end{proof}
   Let us now have a representation of  fixed points for linear isometries.
   
  \begin{proposition}\label{P}
  Let $h \in Aut(\mathcal{B})$ be defined as $$h(A)=(BUA+CV){(C^*UA+EV)}^{-1}.$$  Let $T_h=\left[ {\begin{array}{cc}
   BU & CV \\
   C^*U & EV \\
  \end{array} } \right]$ be a linear isometry that corresponds to $h$, cf. Theorem \ref{c}. Let $\{\boldsymbol{\mathrm{z}_r},\,\,r=1,\,2,\,...,\,n\}$ be a basis of $\mathbb{C}^n$ and $F \in B({\mathbb{C}}^n,H)$ be such that
  $\left[{\begin{array}{c}
  F(\boldsymbol{\mr{z}_r})\\
  \bs{\mr{z}_r}\\
  \end{array}}\right]$ are eigenvectors of $T_h$, $r=1,\,2,\,...,\,n$, then $F$ is a fixed point of $h$. Conversely, if $F$ is a fixed point of \(h\) and there exists an eigenbasis  $\{\boldsymbol{\mr{z}_r},\,\,r=1,\,2,\,...,\,n\}$ of $C^*UF+EV$, then \\  $\left[{\begin{array}{c}
  F(\boldsymbol{\mr{z}_r})\\
  \boldsymbol{\mr{z}_r}\\
  \end{array}}\right]$ are eigenvectors of $T_h$.
  \end{proposition}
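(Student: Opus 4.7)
The plan is to translate the eigenvector condition for $T_h$ into two block-row identities and match them against the formula for $h(F)$. Concretely, the statement $T_h \begin{pmatrix} F(\mathrm{z}) \\ \mathrm{z} \end{pmatrix} = \lambda \begin{pmatrix} F(\mathrm{z}) \\ \mathrm{z} \end{pmatrix}$ is equivalent to the pair of equations $(BUF + CV)\mathrm{z} = \lambda F(\mathrm{z})$ and $(C^*UF + EV)\mathrm{z} = \lambda \mathrm{z}$, obtained by reading off the top and bottom rows separately. Both directions of the proposition will follow by manipulating this pair.

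For the forward direction, I would first observe that since $T_h \in \mathcal{G}$ is a bijective operator, every eigenvalue $\lambda_r$ is nonzero. The bottom-row equation applied at $\mathrm{z} = \mathrm{z}_r$ says that $\mathrm{z}_r$ is an eigenvector of $C^*UF + EV$ with eigenvalue $\lambda_r$; by the preceding lemma this operator is invertible, so $(C^*UF + EV)^{-1}\mathrm{z}_r = \lambda_r^{-1}\mathrm{z}_r$. Combining this with the top-row equation gives $h(F)(\mathrm{z}_r) = (BUF + CV)(C^*UF + EV)^{-1}\mathrm{z}_r = \lambda_r^{-1}(BUF + CV)\mathrm{z}_r = F(\mathrm{z}_r)$. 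Since the $\mathrm{z}_r$'s span $\mathbb{C}^n$ and both $h(F)$ and $F$ are bounded linear operators, this forces $h(F) = F$.

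For the converse direction, the hypothesis $h(F) = F$, together with the invertibility of $C^*UF + EV$, rearranges to the operator identity $BUF + CV = F(C^*UF + EV)$ on $\mathbb{C}^n$. Given an eigenbasis $\{\mathrm{z}_r\}$ of $C^*UF + EV$ with $(C^*UF + EV)\mathrm{z}_r = \mu_r \mathrm{z}_r$, applying the identity at $\mathrm{z}_r$ yields $(BUF + CV)\mathrm{z}_r = F(\mu_r \mathrm{z}_r) = \mu_r F(\mathrm{z}_r)$. Assembling the two equalities $(BUF+CV)\mathrm{z}_r = \mu_r F(\mathrm{z}_r)$ and $(C^*UF+EV)\mathrm{z}_r = \mu_r \mathrm{z}_r$ back into block form produces exactly $T_h \begin{pmatrix} F(\mathrm{z}_r) \\ \mathrm{z}_r \end{pmatrix} = \mu_r \begin{pmatrix} F(\mathrm{z}_r) \\ \mathrm{z}_r \end{pmatrix}$.

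There is no serious obstacle here; the proposition is essentially a bookkeeping identity between the eigenvalue equation for the block operator and the fixed-point equation for the associated M\"obius-type map. The only two points that require a moment of care are that eigenvalues in the forward direction must be nonzero (which comes for free from $T_h$ being bijective) and that $C^*UF + EV$ must be invertible so that the formula for $h(F)$ is meaningful, which is precisely the content of the lemma immediately preceding this proposition.
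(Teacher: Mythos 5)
Your proof is correct and takes essentially the same route as the paper's: read off the top and bottom block rows of the eigenvalue equation and match them against the identity $BUF+CV=F(C^*UF+EV)$, which is equivalent to $h(F)=F$ once $C^*UF+EV$ is invertible. The only cosmetic difference is that in the forward direction the paper applies $F$ to the bottom row to obtain that operator identity directly, whereas you invert $C^*UF+EV$ on each eigenvector (its invertibility there following from the nonzero eigenvalues rather than from the preceding lemma, which is stated only for $A\in\overline{\mathcal{B}}$); both amount to the same computation.
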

  \begin{proof}
  Let $\left[{\begin{array}{c}
  F(\boldsymbol{\mr{z}_r})\\
  \bs{\mr{z}_r}\\
  \end{array}}\right]$ be eigenvectors of $T_h$ having eigenvalues $\lambda_r$ (say),  i.e.  $\left[ {\begin{array}{cc}
   BU & CV \\
   C^*U & EV \\
  \end{array} } \right] \left[{\begin{array}{c}
  F(\bs{\mr{z}_r})\\
  \bs{\mr{z}_r}\\
  \end{array}}\right]=\lambda_r\left[{\begin{array}{c}
  F(\bs{\mr{z}_r})\\
  \bs{\mr{z}_r}\\
  \end{array}}\right]$. This gives\\
  $BU(F(\bs{\mr{z}_r}))+CV(\bs{\mr{z}_r})=F(C^*U(F(\bs{\mr{z}_r}))+EV(\bs{\mr{z}_r}))$  and hence $BUF+CV=F(C^*UF+EV)$ or $(BUF+CV)(C^*UF+EV)^{-1}=F$ as $\{\boldsymbol{\mathrm{z}_r},\,\,r=1,\,2,\,...,\,n\}$ is a basis of $\mathbb{C}^n$. Thus $F$ is a fixed point of $h$.\\
  Conversely, Let $F$ be a fixed point of $h$ and  ${\mu_r}'s$ be the eigenvalues corresponding to the eigenbasis   $\{\boldsymbol{\mr{z}_r},\,\,r=1,\,2,\,...,\,n\}$  of $C^*UF+EV$. This gives\\
  $\left[ {\begin{array}{cc}
   BU & CV \\
   C^*U & EV \\
  \end{array} } \right] \left[{\begin{array}{c}
  F(\bs{\mr{z}_r})\\
  \bs{\mr{z}_r}\\
  \end{array}}\right]=\left[{\begin{array}{c}
 BU(F(\bs{\mr{z}_r}))+CV(\bs{\mr{z}_r}) \\
  C^*U(F(\bs{\mr{z}_r}))+EV(\bs{\mr{z}_r})\\
  \end{array}}\right]$\\
  $=\left[{\begin{array}{c}
 F(C^*U(F(\bs{\mr{z}_r}))+EV(\bs{\mr{z}_r})) \\
  C^*U(F(\bs{\mr{z}_r}))+EV(\bs{\mr{z}_r})\\
  \end{array}}\right]$   = $\mu_r\left[{\begin{array}{c}
  F(\bs{\mr{z}_r})\\
  \bs{\mr{z}_r}\\
  \end{array}}\right],\,\,r=1,\,2,\,...,\,n$. This completes the proof.
  \end{proof}
  The above proposition tells that the fixed points which are obtained via eigenvectors form a subclass of collection of all the fixed points.\\
  
 In view of  Theorem \ref{c}, \textbf{\textit{Fixed points of an isometry} \pmb{$T \in \mathcal{G}$} \textbf{would mean fixed points of its identification in} \pmb{$Aut(\mathcal{B})$}} \textbf{\textit{and from now onwards, fixed points would mean the ones which are  obtained through eigenvectors only,  (cf. Proposition \ref{P}).}}\\  
 
 In order to investigate the fixed points of a non-unitary normal isometry, let us first examine its spectrum.
 \begin{lemma}\label{l2}
  For the matrix $S_i=\left[ {\begin{array}{cc}
   a_iI & I \\
   {\delta_i}^2I & a_iI \\
  \end{array} } \right]_{2k_i \times 2k_i}$ where $I$ is a $k_i \times k_i$ identity matrix, the eigenvalues are $a_i \pm \delta_i$.  The corresponding eigenspaces  are generated by the sets $\{(\Delta_{m}^{(j)})_{m=1}^{2k_i},\,\,\,\,j \in J_i\}$ and $\{(\Delta_{m'}^{(j)})_{m'=1}^{2k_i},\,\,\,\,j \in J_i\}$ where 
  \begin{align*}
\Delta_{m}^{(j)} =
\left\{
	\begin{array}{llll}
		1  & \mbox{if } m=k_i+j, \\
		0 & \mbox{if } m=k_i+p,\, p \in J_i \setminus \{j\}, \\
		 \dfrac{1}{\delta_i} & \mbox{if }  j=m, \\
		0 & \mbox{if }  m \in J_i \setminus \{j\}
	\end{array}
\right.
    \end{align*}
  and $\Delta_{m'}^{(j)}$ is obtained by replacing $\dfrac{1}{\delta_i}$ by $-\dfrac{1}{\delta_i}$.
 \end{lemma}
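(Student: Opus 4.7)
The matrix $S_i$ is a $2\times 2$ block matrix whose four blocks are all scalar multiples of the $k_i\times k_i$ identity, so they commute with each other. This is exactly the situation in which one can compute a block determinant by the commutative formula $\det\begin{pmatrix} P & Q \\ R & S\end{pmatrix} = \det(PS - QR)$. My plan is to exploit this throughout: first to locate the spectrum, then to solve the eigenvector system directly.

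\textbf{Step 1 (Characteristic polynomial).} I would apply the commutative block-determinant formula to $S_i - \lambda I_{2k_i}$ to obtain
\[
\det(S_i - \lambda I) \;=\; \det\bigl((a_i-\lambda)^2 I - \delta_i^2 I\bigr) \;=\; \bigl((a_i - \lambda)^2 - \delta_i^2\bigr)^{k_i}.
\]
Factoring gives the two roots $\lambda = a_i \pm \delta_i$, each with algebraic multiplicity $k_i$. Note $\delta_i > 0$ by (\ref{eq:39}) and the fact that we are in the non-unitary situation ($a_i > 1$), so the two eigenvalues are distinct.

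\textbf{Step 2 (Eigenvectors).} Write a candidate eigenvector as $v = \begin{pmatrix} u \\ w \end{pmatrix}$ with $u, w \in \mathbb{C}^{k_i}$. The eigenvalue equation $S_i v = \lambda v$ becomes the pair
\[
(a_i - \lambda)\,u + w = 0, \qquad \delta_i^2\,u + (a_i - \lambda)\,w = 0.
\]
For $\lambda = a_i + \delta_i$, the first equation forces $w = \delta_i u$ and the second is automatically satisfied; for $\lambda = a_i - \delta_i$, it forces $w = -\delta_i u$ with the second again automatic. Choosing $u = \pm \frac{1}{\delta_i} e_j^{(k_i)}$ and $w = e_j^{(k_i)}$, where $e_j^{(k_i)}$ is the $j$-th standard basis vector of $\mathbb{C}^{k_i}$, produces exactly the vector $[\Delta_{j_m}]_{2k_i\times 1}$ prescribed in the statement. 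A direct substitution confirms it is an eigenvector for the corresponding sign.

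\textbf{Step 3 (Basis of each eigenspace).} Finally I need to show the $k_i$ vectors produced, as $j$ ranges over $J_i$ for a fixed sign, span the whole $\lambda$-eigenspace. Linear independence is immediate since their bottom halves are the distinct standard basis vectors $e_1^{(k_i)}, \ldots, e_{k_i}^{(k_i)}$. Combined with the algebraic multiplicity $k_i$ from Step 1 and the fact that eigenvectors of distinct eigenvalues are linearly independent (so geometric multiplicity cannot exceed algebraic multiplicity), this gives exactly a basis of the eigenspace.

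I do not anticipate any genuine obstacle: the entire argument is routine finite-dimensional linear algebra made possible by the commutativity of the four blocks of $S_i$. The only place to be slightly careful is checking that $\delta_i \neq 0$ (so the eigenvalues are genuinely distinct and the formula $\pm 1/\delta_i$ is meaningful), which is guaranteed by Remark~\ref{r6} in the non-unitary setting under consideration.
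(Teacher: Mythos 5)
Your proposal is correct and follows essentially the same route as the paper: both reduce the eigenvalue problem to the pair of equations $(a_i-\lambda)u+w=0$, $\delta_i^2 u+(a_i-\lambda)w=0$ and read off $\lambda=a_i\pm\delta_i$ with eigenvectors $u=\pm\frac{1}{\delta_i}w$. Your Step 1 (block determinant) and Step 3 (matching geometric to algebraic multiplicity $k_i$) make explicit a completeness check that the paper leaves implicit, but the substance of the argument is identical.
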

 \begin{proof}
 For $\lambda_i \in \mathbb{C}$, consider the expression
\[ \left[ {\begin{array}{cc}
  ( a_i-\lambda_i)I & I \\
   {\delta_i}^2I & (a_i-\lambda_i) I \\
  \end{array} } \right]\left[ {\begin{array}{c}
  r_j\\
   \end{array} } \right]_{2k_i \times 1} \equiv 0. \]
  This implies
  \begin{align*}
  (a_i-\lambda_i)r_j+r_{k_i+j}&=0,\\
  {\delta_i}^2r_j+(a_i-\lambda_i)r_{k_i+j}&=0,
      \end{align*}
  $j \in J_i$. On simplification, the above two equations yield $\lambda_i=a_i \pm \delta_i$ are the two eigenvalues of $S_i$ and general eigenvectors are 
 $ \left[ {\begin{array}{c}
  r_j\\
   \end{array} } \right]_{2k_i \times 1}$, respectively where for arbitrary values of $r_{k_i+j} \in \mathbb{C}$, $r_j=\pm \dfrac{r_{k_i+j}}{\delta_i}$, $j \in J_i$. The eigenspaces corresponding to the  eigenvalues $\lambda_i \pm \delta_i$ are 
   \[E_i=span\,\Bigg\{\Big(\underbrace{0,\,0,\,...,\,\underbrace{ \dfrac{1}{\delta_i}}_{j^{th}\,\,\text{place}},0,...,0}_{\text{first}\,\,k_i\,\,\text{entries}},\underbrace{0,...,\underbrace{1}_{k_i +j^{th}\,\,\text{place}},0,...,0}_{\text{next}\,\,k_i\,\,\text{entries}}\Big),\,\,\,\,j \in J_i\Bigg\}\]
 and 
 \[{E_i}'=span\,\Bigg\{\Big(\underbrace{0,\,0,\,...,\,\underbrace{- \dfrac{1}{\delta_i}}_{j^{th}\,\,\text{place}},0,...,0}_{\text{first}\,\,k_i\,\,\text{entries}},\underbrace{0,...,\underbrace{1}_{k_i +j^{th}\,\,\text{place}},0,...,0}_{\text{next}\,\,k_i\,\,\text{entries}}\Big),\,\,\,\,j \in J_i\Bigg\}\] respectively  which is expanded version of the conclusion of the statement of lemma. 
 \end{proof}

  \begin{proposition} \label{f}
   Let $T=\left[ {\begin{array}{cc}
   BU & CV \\
   C^*U & EV \\
  \end{array} } \right] \in \mathcal{G}$ be a non-unitary normal isometry. Then  $$\sigma(T)=\overset{l}{\underset{i=1}{\cup}}\{\lambda_{i_j}(a_i \pm \delta_i),\,\,j \in J_i\} \cup \sigma(U\restriction_{(ran\,C)^{\perp}}) \cup \sigma_{pt}(V\restriction_{{K'}^{\perp}})$$ where   ${\lambda_{i_j}}$, $j \in J_i$  are eigenvalues of $U\restriction_{M_i}$ (cf (\ref{eq:36})) for each $i \in I$. The  eigenvectors corresponding to the eigenvalues $\lambda_{i_j}(a_i \pm \delta_i)$ are  $\left(\pm\dfrac{1}{\delta_i}C(\boldsymbol{\mr{z_{i_j}}}),\bs{\mr{z_{i_j}}}\right)$, respectively for $j \in J_i$ and $i \in I$ where the collection $\{\bs{\mr{z_{i_j}}},\,\,j \in J_i\}$ is some orthonormal basis of $K_i$ for each $i \in I$.
  \end{proposition}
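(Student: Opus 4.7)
The plan is to leverage Corollary \ref{e}, which gives the orthogonal decomposition $T = \bigoplus_{i=1}^{l} T_i \oplus T' \oplus T''$ with $T_i = T\restriction_{M_i \oplus K_i}$, $T' = U\restriction_{(\mathrm{ran}\,C)^\perp}$ and $T'' = V\restriction_{{K'}^\perp}$. Since the spectrum of an orthogonal direct sum is the union of the spectra of the summands, one immediately obtains
\[ \sigma(T) = \bigcup_{i=1}^{l} \sigma(T_i) \,\cup\, \sigma(T') \,\cup\, \sigma(T''). \]
The space ${K'}^\perp \subseteq \mathbb{C}^n$ is finite-dimensional, so $\sigma(T'') = \sigma_{pt}(V\restriction_{{K'}^\perp})$, which matches the third piece of the claimed formula, while $\sigma(T')$ is already of the required form. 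Thus the remaining task is to identify $\sigma(T_i)$ for each $i \in I$.

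By Corollary \ref{c2}, $T_i$ has matrix $\left[\begin{array}{cc} a_i R_i & R_i \\ \delta_i^2 R_i & a_i R_i \end{array}\right]$ in the basis ${\beta_i}' \cup \beta_i$, where $R_i$ is simultaneously the matrix of $U\restriction_{M_i}$ and of $V\restriction_{K_i}$. Because $U\restriction_{M_i}$ is the restriction of a unitary operator to an invariant subspace (by \eqref{eq:34}), $R_i$ is unitary and hence diagonalizable; I would fix an orthonormal eigenbasis $\{\bs{\mr{z_{i_j}}}\}_{j \in J_i}$ of $V\restriction_{K_i}$ with eigenvalues $\lambda_{i_j}$. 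Simultaneously diagonalizing both copies of $R_i$ in the block form above and permuting rows and columns reduces $T_i$ to a block-diagonal matrix whose $k_i$ blocks are each of the shape $\lambda_{i_j}\,S$ with $S = \left[\begin{array}{cc} a_i & 1 \\ \delta_i^2 & a_i \end{array}\right]$. Specializing Lemma \ref{l2} to $k_i = 1$ yields $\sigma(S) = \{a_i \pm \delta_i\}$ with eigenvectors $(\pm 1/\delta_i,\,1)$, so each block contributes the eigenvalues $\lambda_{i_j}(a_i \pm \delta_i)$, and altogether one recovers $2k_i = \dim(M_i \oplus K_i)$ eigenvalues, exhausting $\sigma(T_i)$.

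To read off the eigenvectors in the original coordinates, I would observe that the normality relation $UC = CV$ from Proposition \ref{P1} forces $U(C\bs{\mr{z_{i_j}}}) = \lambda_{i_j}\,C\bs{\mr{z_{i_j}}}$, so $C\bs{\mr{z_{i_j}}} \in M_i$ is the eigenvector of $U\restriction_{M_i}$ matching $\bs{\mr{z_{i_j}}}$ under the identification encoded by $R_i$. Tracing the $2\times 2$ reduction back then produces $T_i$-eigenvectors $\bigl(\pm \tfrac{1}{\delta_i}\,C\bs{\mr{z_{i_j}}},\,\bs{\mr{z_{i_j}}}\bigr)$ with eigenvalues $\lambda_{i_j}(a_i \pm \delta_i)$, as claimed. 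As a cross-check I would verify this directly by applying $T$ to $\bigl(\pm \tfrac{1}{\delta_i}\,C\bs{\mr{z_{i_j}}},\,\bs{\mr{z_{i_j}}}\bigr)$ using the inputs $B\restriction_{M_i} = a_i I$, $E\restriction_{K_i} = a_i I$, $UC = CV$, and $C^*C\restriction_{K_i} = \delta_i^2 I$ (the last coming from $E^2 - C^*C = I$ together with $a_i^2 = 1 + \delta_i^2$); the computation is routine but worth doing to confirm signs. The step I would guard most carefully is the dimension count guaranteeing that no generalized (Jordan-type) eigenvectors have been missed in the decomposition of $T_i$; this is safe precisely because $R_i$ is unitary, so the two eigenvalues $a_i \pm \delta_i$ of $S$ yield genuinely two-dimensional eigenspaces after diagonalizing each $\lambda_{i_j}$-component.
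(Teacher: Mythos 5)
Your proposal is correct and follows essentially the same route as the paper: decompose $T$ via Corollary \ref{e}, take the block matrix of Corollary \ref{c2}, diagonalize the unitary $R_i$, and use Lemma \ref{l2} to extract the eigenvalues $\lambda_{i_j}(a_i\pm\delta_i)$ with eigenvectors $\left(\pm\frac{1}{\delta_i}C(\bs{\mr{z_{i_j}}}),\bs{\mr{z_{i_j}}}\right)$. The only cosmetic difference is that the paper factors $[T_i]_{{\beta_i}'\cup\beta_i}=S_iW_i$ into commuting diagonalizable matrices and works with the eigenspaces $E_i$, ${E_i}'$ of $S_i$, whereas you conjugate and permute down to $2\times 2$ blocks $\lambda_{i_j}S$; both organize the same computation and your direct verification closes any gap.
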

   \begin{proof}
   In view of Corollary \ref{e}, $\sigma(T)=\overset{l}{\underset{i=1}{\cup}}{\sigma_{pt}{(T_i)}} \cup \sigma(U\restriction_{(ran\,C)^{\perp}}) \cup \sigma_{pt}(V\restriction_{{K'}^{\perp}})$ where
   $\sigma(U\restriction_{(ran\,C)^{\perp}})$ and $\sigma_{pt}(V\restriction_{{K'}^{\perp}})$ are  contained in the unit circle. We will show that for every $i \in I$, $\sigma_{pt}{(T_i)}=\{{\lambda_{i_j}}(a_i \pm \delta_i),\,\,j \in J_i\}$. \\
   For $i \in I$, consider $T_i=T\restriction_{M_i \oplus K_i}$. From Corollary \ref{c2},  
\begin{align*}
 [T_i]_{{\beta_i}' \cup {\beta_i}}&=\left[ {\begin{array}{cc}
   a_iR_i & R_i \\
   {\delta_i}^2 R_i & a_i R_i \\
  \end{array} } \right]_{2k_i \times 2k_i}\\
  &= \left[ {\begin{array}{cc}
   a_i{[I]}_{k_i \times k_i} & {[I]}_{k_i \times k_i} \\
   {\delta_i}^2{[I]}_{k_i \times k_i} & a_i{[I]}_{k_i \times k_i} \\
  \end{array} } \right] \left[ {\begin{array}{cc}
   R_i & 0 \\
   0 & R_i \\
  \end{array} } \right]=S_i\,W_i
  \end{align*}
  where $S_i=\left[ {\begin{array}{cc}
   a_i{[I]}_{k_i \times k_i} & {[I]}_{k_i \times k_i} \\
   {\delta_i}^2{[I]}_{k_i \times k_i} & a_i{[I]}_{k_i \times k_i} \\
  \end{array} } \right]$  and $W_i=\left[ {\begin{array}{cc}
   R_i & 0 \\
   0 & R_i \\
  \end{array} } \right]$.
From the proof of Lemma \ref{l2}, it can be observed that $S_i$ is diagonalizable. It has eigenvalues  $a_i \pm \delta_i$ and  $E_i$ and ${E_i}'$ as corresponding eigenspaces. Also it is easy to see that $S_i$ and $W_i$ commute. This implies $W(E_i)=E_i$ and $W({E_i}')={E_i}'$.   Let $\left\{\left({c_{j_p}}^{(i)}\right)_{p=1}^{k_i},\,\,j \in J_i\right\}$ be an orthonormal eigenbasis of $R_i$ having $\lambda_{i_j}$ as eigenvalues, ${c_{j_p}}^{(i)} \in \mathbb{C}$. Hence  $\left\{\left(\dfrac{\left({c_{j_p}}^{(i)}\right)_{p=1}^{k_i}}{\delta_i},\left({c_{j_p}}^{(i)}\right)_{p=1}^{k_i}\right),\,\,j \in J_i\right\}$ and \\ $\left\{\left(-\dfrac{\left({c_{j_p}}^{(i)}\right)_{p=1}^{k_i}}{\delta_i}, \left({c_{j_p}}^{(i)}\right)_{p=1}^{k_i}\right),\,\,j \in J_i\right\}$ would serve as eigenbases of $W_i\restriction_{{E_i}}$ and $W_i\restriction_{{E_i}'}$  respectively with common eigenvalues as $\{{\lambda_{i_j}},\,\,j \in J_i\}$.  
As these are also eigenbases of $T_i \restriction_{E_i}$ and  $T_i \restriction_{{E_i}'}$, union of these eigenbases is eigenbasis of $[T_i]_{{\beta_i}' \cup {\beta_i}}$.   Let
\begin{eqnarray*}
\bs{\mr{z_{i_j}}}=\sum\limits_{p=1}^{k_i}{{c_{j_p}}^{(i)}}e_{i_p}
\end{eqnarray*}
where $\{e_{i_p},\,\,p \in J_i\}$ is as in (\ref{eq:24}). Hence $\{\bs{\mr{z_{i_j}}},\,\,j \in J_i\}$ is an orthonormal basis of $K_i$. Also 
\begin{eqnarray}\label{eq:38}
C(\bs{\mr{z_{i_j}}})=\sum\limits_{p=1}^{k_i}{{c_{j_p}}^{(i)}}\xi_{i_p}
\end{eqnarray}
by (\ref{eq:29}). This gives the eigenvectors corresponding to eigenvalues $\lambda_{i_j}(a_i \pm \delta_i)$
  as  $\left(\pm\dfrac{1}{\delta_i}C(\boldsymbol{\mr{z_{i_j}}}),\bs{\mr{z_{i_j}}}\right),\,\,\,\,j \in J_i$.
\end{proof}
 
   It is easy to observe that eigenvalues $\lambda_{i_j}(a_i \pm \delta_i)$ are distinct for distinct values of  $i$. Also $\sigma(T_i \restriction_{E_i}) \cap \sigma(T_i \restriction_{{E_i}'})=\phi$.
   
   \begin{remark}\label{R1}
   Notice that for a non-unitary normal isometry \\ $T=\left[ {\begin{array}{cc}
   BU & CV \\
   C^*U & EV \\
  \end{array} } \right] \in \mathcal{G}$,    $\left[{\begin{array}{c}
  h\\
  \bs{\mr{z}}\\
  \end{array}}\right] \in {(ran\,C)}^{\perp} \oplus {K'}^{\perp}$ is an eigenvector of  $T$ if and only if  there exists an eigenvalue $\mu$ such that $U(h)=\mu h$ and $V(\bs{\mr{z}})=\mu \bs{\mr{z}}$.
   \end{remark}
Let us conclude the preceding analysis by exhibiting some special fixed points of a non-unitary normal isometry.   
 \begin{theorem} \label{g}
 Let $T=\left[ {\begin{array}{cc}
   BU & CV \\
   C^*U & EV \\
  \end{array} } \right] \in \mathcal{G}$ be a non-unitary normal isometry such that $dim\,(ran\,C)=k$. Then it has exactly $2^k$ fixed points of the generic type (cf. Definition \ref{d2})  
  such that $\|F_\theta\|=1$ for all $\theta \in S$. 
 \end{theorem}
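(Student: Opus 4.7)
The plan is to construct $2^k$ generic fixed points explicitly by combining the eigenvectors of $T$ listed in Proposition \ref{f} with eigenvectors from the trivial summand $V\restriction_{{K'}^\perp}$, and then to compute the operator norm of each. To produce the basis required in Definition \ref{d2}, retain the notation of Proposition \ref{f}: for each $i \in I$ the set $\{\bs{\mr{z_{i_j}}}: j \in J_i\}$ is an orthonormal eigenbasis of $V\restriction_{K_i}$, and their union is an orthonormal basis of $K' = \overset{l}{\underset{i=1}{\oplus}}K_i$ of cardinality $k$. Since $V\restriction_{{K'}^\perp}$ is a unitary on a finite-dimensional space, pick an orthonormal eigenbasis $\{\bs{\mr{w}_s}: s = 1, \ldots, n-k\}$ for it; the concatenation yields an orthonormal basis $\{\bs{\mr{z}_r}\}_{r=1}^n$ of $\mathbb{C}^n$ whose first $k$ vectors span $K'$.

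For each $\theta = (\epsilon_{i_j}) \in \{-1, +1\}^k = S$ define $F_\theta \in B(\mathbb{C}^n, H)$ by
\begin{align*}
F_\theta(\bs{\mr{z_{i_j}}}) &= \frac{\epsilon_{i_j}}{\delta_i}\,C(\bs{\mr{z_{i_j}}}), \\
F_\theta(\bs{\mr{w}_s}) &= 0.
\end{align*}
Proposition \ref{f} asserts that $\left(\pm\delta_i^{-1} C(\bs{\mr{z_{i_j}}}),\,\bs{\mr{z_{i_j}}}\right)$ is an eigenvector of $T$ with eigenvalue $\lambda_{i_j}(a_i\pm\delta_i)$, and Remark \ref{R1} gives the same for $(0, \bs{\mr{w}_s})$. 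Hence every lift $\left[{\begin{array}{c}F_\theta(\bs{\mr{z}_r})\\ \bs{\mr{z}_r}\end{array}}\right]$ is an eigenvector of $T$, so each $F_\theta$ is a generic fixed point with $q = k$ and $y_{i_j} = \delta_i^{-1} C(\bs{\mr{z_{i_j}}}) \neq 0$. Distinct sign vectors produce operators disagreeing on at least one basis vector, so this exhibits exactly $2^k$ distinct generic fixed points.

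It remains to prove $\|F_\theta\| = 1$. The key observation is that $\{\delta_i^{-1} C(\bs{\mr{z_{i_j}}}): i \in I,\ j \in J_i\}$ is an orthonormal subset of $H$. Indeed, the subspaces $M_i$ of Remark \ref{r6} are pairwise orthogonal, which handles cross terms in the index $i$; and within a fixed $i$, the expansion $C(\bs{\mr{z_{i_j}}}) = \sum_p c_{j_p}^{(i)}\xi_{i_p}$ from the proof of Proposition \ref{f}, combined with $\langle \xi_{i_p}, \xi_{i_q}\rangle = \delta_i^2\delta_{pq}$ and the orthonormality of the coordinate vectors $(c_{j_p}^{(i)})_p$, yields $\langle C(\bs{\mr{z_{i_j}}}), C(\bs{\mr{z_{i_{j'}}}})\rangle = \delta_i^2 \delta_{jj'}$. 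Therefore, for any unit vector $\bs{\mr{v}} = \sum_{i,j} \alpha_{i_j}\bs{\mr{z_{i_j}}} + \sum_s \beta_s\bs{\mr{w}_s}$ in $\mathbb{C}^n$, Parseval gives $\|F_\theta(\bs{\mr{v}})\|^2 = \sum_{i,j}|\alpha_{i_j}|^2 \leq 1$, with equality whenever $\bs{\mr{v}}\in K'$; consequently $\|F_\theta\| = 1$, so each generic fixed point lies on $\partial\mathcal{B}$.

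The only substantive obstacle is the orthonormality computation at the start of the norm step; once that is in hand, the norm identity is just Parseval, and the eigenvector verification of the previous step is a direct application of Proposition \ref{f} and Remark \ref{R1}.
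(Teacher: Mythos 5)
Your proposal is correct and follows essentially the same route as the paper: it builds the $2^k$ generic functions from the orthonormal eigenbasis $\{\bs{\mr{z_{i_j}}}\}$ of Proposition \ref{f} together with an eigenbasis of $V\restriction_{{K'}^{\perp}}$, sets $y_{i_j}=\delta_i^{-1}C(\bs{\mr{z_{i_j}}})$, and verifies the eigenvector and norm claims exactly as the paper does (the paper phrases the norm step as $F_\theta\restriction_{K'}$ being a surjective isometry of $K'$ onto $ran\,C$, which is the same orthonormality-plus-Parseval computation you spell out).
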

 \begin{proof}
 In view of Definition \ref{d2}, consider $2^k$ number of generic functions for $q=k$ as follows.  As $\sum\limits_{i=1}^{l}{k_i}=k$, re-define the $k$-tuple $(\epsilon_m)_{m=1}^{k}$ as $(\epsilon_{i_j})_{i\in I,\,\,j\in J_i}$, orthonormal basis $\{\bs{\mr{z_m}},\,\,m=1,\,2,\,...,\,k\}$ as $\{\bs{\mr{z_{i_j}}},\,\,i \in I,\,\,j\in J_i \}$ where the collection $\{\bs{\mr{z_{i_j}}}\}$ is as in Proposition \ref{f} and choose $\{\bs{\mr{z_s}},\,\,s=k+1,\,....,\,n\}$ to be  an orthonormal eigenbasis  of $V \restriction_{{K'}^{\perp}}$. Take $\{y_m,\,\,m=1,\,2,\,...,\,k\}=\left\{\dfrac{C(\bs{\mr{z_{i_j}}})}{\delta_i},\,\,i \in I,\,\,j\in J_i \right\}$. Note that $\dfrac{C(\bs{\mr{z_{i_j}}})}{\delta_i} \neq 0$ as $C\restriction_{K'}$ is a bijective operator. For generic functions defined in this way, Proposition \ref{f} tells that  $\left[\begin{array}{c}
 F_\theta(\bs{\mr{z_{i_j}}})\\
 \bs{\mr{z_{i_j}}}\\
 \end{array}\right]$ are eigenvectors of $T$ for each $\theta \in S$,  $i \in I$ and $j \in J_i$. Also it is easy to see that $\left[\begin{array}{c}
 0\\
 \bs{\mr{z}_{s}}\\
 \end{array}\right]$ are eigenvectors of $T$ for $s=k+1,\,...,\,n$. Thus in view of Proposition \ref{P}, $T$ has exactly $2^k$ fixed points of the generic type. Now we will see that $\|F_\theta\|=1$ for $\theta \in S$.\\
 Equations (\ref{eq:38}) and (\ref{eq:26}) yield that the collection $\left\{\dfrac{C(\bs{\mr{z_{i_j}}})}{\delta_i},\,\,i \in I,\,j\in J_i\right\}$ is an orthonormal set and hence forms an orthonormal basis of $ran\,C$. This makes the generic function ${F_\theta}\restriction_{K'}$ a norm preserving surjective linear map and hence $\|{F_\theta}\restriction_{K'}\|=1$. Also ${F_\theta}\restriction_{{K'}^{\perp}} \equiv 0$. Therefore $\|F_\theta\|=\|{F_\theta}\restriction_{K'}\|=1$.
\end{proof}
 
 Another way of obtaining fixed points is subject to some condition. Let $\sigma_{pt}(U \restriction_{{(ran\,C)}^{\perp}}) \cap \sigma_{pt}(V \restriction_{{K'}^{\perp}}) \neq \phi$ and  $\mu$ be a common eigenvalue. Define $F \in B(\mathbb{C}^n ,H)$ such that $F\restriction_{K'}={F_\theta}\restriction_{K'}$ where $F_\theta$ is as in Theorem \ref{g}. For ${K'}^{\perp}$, $F$ may take basis elements of the  eigenspace of $V \restriction_{{K'}^{\perp}}$ to the values in the  eigenspace of $U \restriction_{{(ran\,C)}^{\perp}}$ corresponding to the common eigenvalue $\mu$. Set $F$ to vanish on those eigenspaces whose corresponding eigenvalues have empty intersection with $\sigma_{pt}(U \restriction_{{(ran\,C)}^{\perp}})$. Such an $F$ is also a fixed point of non-unitary normal isometry by Remark \ref{R1} and  Proposition \ref{P}.\\

\subsection{Proof of Theorem \ref{y}}  
   \begin{lemma}\label{l1}
    Let an isometry $T=\left[ {\begin{array}{cc}
   BU & CV \\
   C^*U & EV \\
  \end{array} } \right] \in \mathcal{G}$ be such that \\ $dim\,(ran\,C)=k$ and it possesses exactly $2^q$ fixed points of generic type (cf. Definition \ref{d2}).  Then 
      $q=k$ and
     $\bs{\mr{z}_m} \in K_i$ and $y_m$ lies in corresponding $ M_i$ for every $m \in\{1,\,2,\,...,\,k\}$ and some $i \in I$ depending on $m$.

  \end{lemma}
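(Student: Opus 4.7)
The plan is to leverage the sign symmetry inherent in $S$ (since $|S| = 2^{q}$ forces both $\epsilon_{m} = \pm 1$ to occur for each coordinate) to extract rigid eigen-relations from $T$, to match them against the spectral structure of $B$ and $E$ recorded in Remark \ref{r6}, and finally to count dimensions.

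First I would extract the eigen-relations. For each fixed $m \in \{1,\dots,q\}$ both $\left[\begin{smallmatrix} y_{m} \\ \bs{\mr{z_{m}}} \end{smallmatrix}\right]$ and $\left[\begin{smallmatrix} -y_{m} \\ \bs{\mr{z_{m}}} \end{smallmatrix}\right]$ are eigenvectors of $T$, say with eigenvalues $\mu_{m}^{\pm}$. Adding and subtracting the two eigenvector equations yields
\begin{align*}
  BU y_{m} &= \alpha_{m} y_{m}, & EV \bs{\mr{z_{m}}} &= \alpha_{m} \bs{\mr{z_{m}}}, \\
  CV \bs{\mr{z_{m}}} &= \beta_{m} y_{m}, & C^{*}U y_{m} &= \beta_{m} \bs{\mr{z_{m}}},
\end{align*}
with $\alpha_{m} = (\mu_{m}^{+} + \mu_{m}^{-})/2$ and $\beta_{m} = (\mu_{m}^{+} - \mu_{m}^{-})/2$. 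Running the same additive/subtractive trick on the $T^{-1}$-eigenequations (using the block form of $T^{-1}$ stated earlier in the paper) produces the companion pair $B y_{m} = \gamma_{m} U y_{m}$ and $E \bs{\mr{z_{m}}} = \gamma_{m} V \bs{\mr{z_{m}}}$, where $\gamma_{m} = (1/\mu_{m}^{+} + 1/\mu_{m}^{-})/2$; invertibility of $B$ together with $y_{m} \neq 0$ forces $\gamma_{m} \neq 0$. Eliminating $U y_{m}$ and $V \bs{\mr{z_{m}}}$ between the two systems produces the key identities
\[
B^{2} y_{m} = \alpha_{m} \gamma_{m}\, y_{m}, \qquad E^{2} \bs{\mr{z_{m}}} = \alpha_{m} \gamma_{m}\, \bs{\mr{z_{m}}}.
\]

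Next I would rule out $\beta_{m} = 0$. If $\beta_{m} = 0$, then $\mu_{m}^{+} = \mu_{m}^{-}$ and the two eigenvectors $\left[\begin{smallmatrix} y_{m} \\ \bs{\mr{z_{m}}} \end{smallmatrix}\right], \left[\begin{smallmatrix} -y_{m} \\ \bs{\mr{z_{m}}} \end{smallmatrix}\right]$ both lie in a common $2$-dimensional eigenspace of $T$, which therefore contains $\left[\begin{smallmatrix} c y_{m} \\ \bs{\mr{z_{m}}} \end{smallmatrix}\right]$ for every $c \in \mathbb{C}^{*}$. Replacing $y_{m}$ by $c y_{m}$ in Definition \ref{d2} then produces a continuum of fixed points of generic type, contradicting the hypothesis of exactly $2^{q}$ such points. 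With $\beta_{m} \neq 0$ in hand I would invoke the orthogonal spectral decomposition of $E^{2}$ from Remark \ref{r6} (eigenvalue $1$ on ${K'}^{\perp}$ and $a_{i}^{2} > 1$ on $K_{i}$): $\bs{\mr{z_{m}}}$ must sit in exactly one of these eigenspaces. The possibility $\bs{\mr{z_{m}}} \in {K'}^{\perp}$ gives $E \bs{\mr{z_{m}}} = \bs{\mr{z_{m}}} = \gamma_{m} V \bs{\mr{z_{m}}}$, hence $V \bs{\mr{z_{m}}} \in {K'}^{\perp}$, so $CV \bs{\mr{z_{m}}} = 0 = \beta_{m} y_{m}$, forcing $\beta_{m} = 0$---a contradiction. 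So $\bs{\mr{z_{m}}} \in K_{i}$ for some $i$ with $\alpha_{m}\gamma_{m} = a_{i}^{2}$, and then $B^{2} y_{m} = a_{i}^{2} y_{m}$ together with the matching spectral decomposition of $B^{2}$ places $y_{m}$ in the corresponding $M_{i}$.

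Finally I would count. The orthonormal family $\{\bs{\mr{z_{m}}}\}_{m=1}^{q}$ lies inside $K' = \bigoplus_{i \in I} K_{i}$, so $q \leq \dim K' = k$. Conversely, for $s > q$ the condition $F_{\theta}(\bs{\mr{z_{s}}}) = 0$ makes $\left[\begin{smallmatrix} 0 \\ \bs{\mr{z_{s}}} \end{smallmatrix}\right]$ an eigenvector of $T$; the top row then forces $CV \bs{\mr{z_{s}}} = 0$, i.e.\ $\bs{\mr{z_{s}}} \in V^{-1}({K'}^{\perp})$, a subspace of dimension $n - k$. Since the $n - q$ independent vectors $\{\bs{\mr{z_{s}}}\}_{s=q+1}^{n}$ fit inside it, $n - q \leq n - k$, so $q \geq k$, and hence $q = k$. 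The main obstacle in the argument is the scaling step for $\beta_{m} = 0$: one must parse the ``exactly $2^{q}$'' hypothesis rigorously enough that the continuum of new generic fixed points obtained by varying $c$ is a genuine contradiction.
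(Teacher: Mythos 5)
Your proposal is correct, and while it follows the same skeleton as the paper's proof (use the sign pairs $\pm y_m$ to extract the four block eigen-relations, show $\bs{\mr{z}_m}\in K'$ and $y_m\in ran\,C$, count dimensions to get $q=k$ exactly as the paper does via $CV(\bs{\mr{z}_s})=0$ and $\dim(\ker C)=n-k$), the middle of your argument is genuinely different and, to my mind, cleaner. The paper expands $\bs{\mr{z}_m}$ and $y_m$ in the bases $\{e_{i_j}\}$ and $\{\xi_{i_j}\}$, compares coefficients to obtain the constraint $\bigl(\tfrac{1}{a_i}-(\tfrac{\lambda_m-\lambda_m'}{\lambda_m+\lambda_m'})^2\tfrac{a_i}{\delta_i^2}\bigr)r^{(m)}_{i_j}=0$, and then rules out two distinct indices by showing they would force $a_i=a_p$. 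You instead pair the $T$-eigenequations with the $T^{-1}$-eigenequations to get the operator identities $B^2y_m=\alpha_m\gamma_m y_m$ and $E^2\bs{\mr{z}_m}=\alpha_m\gamma_m\bs{\mr{z}_m}$, so that $\bs{\mr{z}_m}$ and $y_m$ are eigenvectors of $E^2$ and $B^2$ and are therefore automatically confined to single eigenspaces, matched up because the eigenvalues agree and the $a_i^2$ are pairwise distinct; this buys you a coordinate-free localization at the cost of invoking the block form of $T^{-1}$. The degenerate case is also handled differently: the paper excludes $\lambda_m=\lambda_m'$ by arguing via its coefficient relation that $y_m$ would vanish (an argument that is slightly delicate, since $h_m=0$ is itself extracted from an equation carrying the factor $\lambda_m-\lambda_m'$), whereas you exclude $\beta_m=0$ by observing that a two-dimensional eigenspace would let you rescale $y_m$ and produce a continuum of generic fixed points, contradicting the count of exactly $2^q$. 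Your reading of ``exactly $2^q$'' as a global count over all generic-type fixed points is the one the paper intends (it is the only reading under which Theorem \ref{y} is non-vacuous), so this step is sound; it also shows more transparently than the paper does where the ``exactly'' hypothesis is actually used.
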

  \begin{proof}
By hypothesis, $\left[\begin{array}{c}
F_\theta(\bs{\mr{z}_m})\\
\bs{\mr{z}_{m}}\\
\end{array}\right]$ are eigenvectors of $T$ for every $\theta \in S$ and $m=1,\,2,\,...,\,q$, i.e.   $\left[\begin{array}{c}
\epsilon_m\, y_m\\
\bs{\mr{z}_{m}}\\
\end{array}\right]=\left[\begin{array}{c}
\pm y_m\\
\bs{\mr{z}_{m}}\\
\end{array}\right]$ are eigenvectors of $T$.  Let $\lambda_m$ and ${\lambda_m}'$ be corresponding eigenvalues. Expanding and simplifying the so formed expressions, we get
   \begin{align}
     CV(\bs{\mr{z}_m})&=\dfrac{\lambda_m-{\lambda_m}'}{2} y_m \label{eq:11}\\
     BU(y_m)&=\dfrac{\lambda_m+{\lambda_m}'}{2} y_m \label{eq:9}\\
     EV(\bs{\mr{z}_m})&=\dfrac{\lambda_m+{\lambda_m}'}{2} \bs{\mr{z}_m},\,\,\,\,\text{and} \label{eq:10}\\
     C^*U(y_m)&=\dfrac{\lambda_m-{\lambda_m}'}{2} \bs{\mr{z}_m}.\nonumber
     \end{align}
     The above equations further simplify to
     \begin{align}
         (\lambda_m + {\lambda_m}')\,C^*B^{-1}(y_m)&=(\lambda_m-{\lambda_m}')\,\bs{\mr{z}_m},\,\,\,\,\text{and}\label{eq:5}\\
         (\lambda_m+{\lambda_m}')\,CE^{-1}(\bs{\mr{z}_m})&=(\lambda_m-{\lambda_m}')\,y_m\label{eq:6}. 
     \end{align}
       As $E$ is invertible,  (\ref{eq:10}) yields $\lambda_m \neq {-\lambda_m}'$. In view of (\ref{eq:33}) and (\ref{eq:32}), each $\bs{\mr{z}_m}$ is a linear combination of basis elements of $\overset{l}{\underset{i=1}{\oplus}}{K_i} \oplus {K'}^{\perp}$  and each $y_m$ is a linear combination of basis elements of $\overset{l}{\underset{i=1}{\oplus}}{M_i} \oplus {(ran\,C)}^{\perp}$ respectively. Thus  for each $m \in \{1,\,2,\,...,\,q\}$, let
     \begin{align*}
         \bs{\mr{z}_m}&= \sum _{i=1}^{l} \sum_{j=1}^{k_i}{r^{(m)}_{{i}_{j}}}\,e_{{i}_{j}}+\bs{\mr{w_m}},\,\,\,\,\bs{\mr{w_m}} \in {K'}^{\perp}, \,\,\text{(by}\,\,(\ref{eq:24}))\\
         y_m&= \sum _{i=1}^{l} \sum_{j=1}^{k_i}{s^{(m)}_{{i}_{j}}}\,\xi_{{i}_{j}}+h_m,\,\,\,\,h_m \in {(ran\,C)}^{\perp},\,\,\text{(by}\,\,(\ref{eq:27}))
     \end{align*}
     $r^{(m)}_{{i}_{j}},\,s^{(m)}_{{i}_{j}} \in \mathbb{C}$.\\ 
 Substituting the above values of $\bs{\mr{z}_m}$ and $y_m$ in (\ref{eq:5}) gives $\bs{\mr{w_m}}=0$, hence $\bs{\mr{z}_m} \in K'$. This also provides the following relation between the coefficients $r^{(m)}_{{i}_{j}}$ and $s^{(m)}_{{i}_{j}}$ 
     
     \begin{align} \label{eq:7}
        s^{(m)}_{{i}_{j}}=\dfrac{\lambda_m-{\lambda_m}'}{\lambda_m+{\lambda_m}'}\,\dfrac{a_i}{{\delta_i}^2}\,r^{(m)}_{{i}_{j}} 
    \end{align}
    using (\ref{eq:31}) and (\ref{eq:30}).
    Similarly, using the values of $\bs{\mr{z}_m}$, $y_m$ and (\ref{eq:7}) in (\ref{eq:6}) yield $h_m=0$, hence $y_m \in ran\,C$.  Comparing the coefficients in (\ref{eq:6}) and using (\ref{eq:7}) yield
    \begin{align} \label{eq:8}
        \left(\dfrac{1}{a_i}-{\left(\dfrac{\lambda_m - {\lambda_m}'}{\lambda_m + {\lambda_m}'}\right)}^2\,\dfrac{a_i}{{\delta_i}^2}\right)\,r^{(m)}_{{i}_{j}}=0
    \end{align}
    by (\ref{eq:29}) and (\ref{eq:28}).
     In view of (\ref{eq:7}), $\lambda_m={\lambda_m}'$ is a contradiction to the hypothesis that all ${y_m} 's$ are non-zero as $h_m=0$. So, $\lambda_m \neq \pm{\lambda_m}'$.
     
     We will see that $q=k$. Since $\{\bs{\mr{z}_m},\,\,m=1,\,2,\,...,q\} \subseteq K'$ is linearly independent, we have $q \leq k$ as $dim\,(K')=k$ by Remark \ref{r6}. Also, $\left[\begin{array}{c}
0\\
\bs{\mr{z_s}}\\
\end{array}\right]$, $s=q+1,\,...,\,n$ are eigenvectors of $T$ as fixed points obtained are of generic type. Let the corresponding eigenvalues be $\lambda_s$. Expanding the concerned expression gives $CV(\bs{\mr{z}_s})=0$. This means $dim\,(\text{ker}\,C)\,(=n-k) \geq n-q$, i.e. $q \geq k $ hence $q=k$. 
     
    Next claim is that each $\bs{\mr{z}_m},\,y_m$ lies in some $K_i,\,M_i$.  As $\bs{\mr{z}_m} \neq 0$, for every $m \in \{1,\,2,\,...,\,k\}$, there exist atleast one $i \in I$ and $j \in J_i$  such that $r^{(m)}_{{i}_{j}} \neq 0$ which means 
   $ \dfrac{1}{a_i}-{\left(\dfrac{\lambda_m - {\lambda_m}'}{\lambda_m + {\lambda_m}'}\right)}^2\,\dfrac{a_i}{{\delta_i}^2}=0$  or 
    \begin{eqnarray}\label{eq:17}
    \dfrac{\lambda_m - {\lambda_m}'}{\lambda_m + {\lambda_m}'}= \pm \dfrac{{\delta_i}}{a_i}
    \end{eqnarray}
   using (\ref{eq:8}) and hence 
    \begin{align} \label{eq:18}
        s^{(m)}_{{i}_{j}}=\pm\dfrac{1}{\delta_i}\,r^{(m)}_{{i}_{j}} 
    \end{align}
    by (\ref{eq:7}). Suppose there exists  $p \neq i \in \{1,\,2,\,...,\,l\}$ such that  $r^{(m)}_{{p}_{j}} \neq 0$ for some $j \in J_p$. This means  $\dfrac{1}{a_{p}}-{\left(\dfrac{\lambda_m - {\lambda_m}'}{\lambda_m + {\lambda_m}'}\right)}^2\,\dfrac{a_{p}}{{\delta_{p}}^2}=0$ which further implies $\dfrac{{\delta_i}^2}{(a_i)^2}=\dfrac{{\delta_{p}}^2}{(a_{p})^2}$, and $a_i=a_{p}$ by (\ref{eq:39}) giving a contradiction. Thus for each $m \in \{1,\,2,\,...,\,k\}$, there exists $i \in I$ such that 
    \begin{eqnarray*}
    \bs{\mr{z}_m}=  \sum\limits_{j=1}^{k_i}{r^{(m)}_{{i}_{j}}}\,e_{{i}_{j}} \in K_i 
    \end{eqnarray*}
     by (\ref{eq:24}). As (\ref{eq:7}) yields ${r_{i_j}}^{(m)}$ and ${s_{i_j}}^{(m)}$ vanish together,
    \begin{eqnarray}\label{eq:2.}
    y_m=\sum\limits_{j=1}^{k_i}{s^{(m)}_{{i}_{j}}}\,\xi_{{i}_{j}} \in M_i
    \end{eqnarray}
    using  (\ref{eq:27}). 
    \end{proof}
    
   \textbf{Proof of Theorem \ref{y}}
  Necessary part follows from Theorem \ref{g}. For the sufficient part we will show $T$ is normal by using Proposition \ref{P1} together with Lemma \ref{l1}. First we will show that the set $\{y_m,\,\,m=1,\,2,\,...,\,k\}$   is  linearly independent.  As each $\bs{\mr{z}_m}$ lies in exactly one $K_i$ by Lemma \ref{l1}, we have each $\bs{\mr{z}_m}$ is an eigenvector of $V$ by (\ref{eq:10}) and (\ref{eq:28}),  hence $K'$ reduces $V$. Now bijectivity of   $C\restriction_{K'}$  asserts linear independence of $\{y_m\}$'s using (\ref{eq:11}). Again using $\bs{\mr{z}_m}$ is an eigenvector of $V$ and (\ref{eq:10}), it follows that $EV=VE$. Similar argument applied  to subspaces of $ran\,C$ establishes $UB=BU$.  
  Now we need to show $UC=CV$.
 Using (\ref{eq:2.}) and (\ref{eq:18}),  $y_m=\pm\dfrac{1}{\delta_i} \sum\limits_{j=1}^{k_i}{r^{(m)}_{{i}_{j}}}\,\xi_{{i}_{j}}=\pm\dfrac{1}{\delta_i} C(\bs{\mr{z}_m})$ 
 or $C(\bs{\mr{z}_m})=\pm \delta_i\,y_m$ by (\ref{eq:29}). Now using  (\ref{eq:9}) and (\ref{eq:31}), $UC(\bs{\mr{z}_m})=\pm \delta_i  \left(\dfrac{\lambda_m+{\lambda_m}'}{2a_i}\right) \,y_m$ $=\left(\dfrac{\lambda_{m}-{\lambda_{m}}'}{2}\right)\, y_{m} =CV(\bs{\mr{z}_{m}})$ by (\ref{eq:17}) and (\ref{eq:11}). Hence $T$ is normal and by Theorem \ref{g}, its generic fixed points lie on $\partial{\mathcal{B}}$. This completes the proof.

\begin{remark}
When $dim\,(K)>1$, then there exist non-trivial orthogonal vectors amongst and between the collection of time-like and light-like vectors. For example, if $H=K=\mathbb{C}^2$ with standard orthonormal basis  $\{e_1,\,e_2\}$ then $(e_1,\,e_1)$ and $(e_2,\,e_2)$ are two light-like orthogonal vectors. $(0,\,e_1)$ and $(0,\,e_2)$ are two orthogonal time-like vectors and $(e_1,\,e_1)$ and $(0,\,e_2)$ are two orthogonal light-like and time-like vectors.
\end{remark}
The following example discussed in \cite[page 55]{FR}  shows that in general holomorphic automorphisms in a $C^*$-algebra may not possess fixed points.\\
\begin{example}
Consider the $C^*$-algebra of all complex valued continuous functions defined on $\overline{\Delta}=\{x \in \mathbb{C}\,\,\,\,|\,\,\,\,|x| \leq 1\}$ with $\|.\|_{\infty}$ norm. Let ${B}'$ be its open unit ball. Then the function $F \in Aut({B}')$ defined by $F(f)(x)=\dfrac{f(x)-\frac{1}{2}x}{1-\frac{1}{2}\overline{x}f(x)}$ doesnot have a fixed point in $\overline{{B}'}$.
\end{example}  
\subsection{Acknowledgement}
The authors acknowledge partial support from the TARE project grant\\
 TAR/2019/000379.  During the course of this work, Rachna acknowledges support from CSIR-SRF grant. 

Part of this work was carried out when two of the authors (R.A and M.M.M) visited IISER Mohali. They acknowledge hospitality and support from IISER Mohali during their stay there.

\bibliographystyle{amsplain}

\end{document}